\numberwithin{equation}{section}
\newcommand{\R}{{\mathbb R}}
\newcommand{\Z}{{\mathbb Z}}
\newcommand{\C}{{\mathbb C}}
\newcommand{\be}{\begin{eqnarray}}
\newcommand{\ben}{\begin{eqnarray*}}
\newcommand{\en}{\end{eqnarray}}
\newcommand{\enn}{\end{eqnarray*}}
\newcommand{\pa}{\partial}
\newcommand{\ov}{\overline}
\newcommand{\curl}{{\rm curl\,}}
\newcommand{\divv}{{\rm div\,}}
\newcommand{\G}{\Gamma}
\newcommand{\Om}{\Omega}
\newtheorem{theorem}{Theorem}[section]
\newtheorem{lemma}[theorem]{Lemma}
\newtheorem{remark}[theorem]{Remark}
\definecolor{rot}{rgb}{0.000,0.000,0.000}
\definecolor{rot1}{rgb}{0.000,0.000,0.000}
\begin{document}
\renewcommand{\theequation}{\arabic{section}.\arabic{equation}}
\begin{titlepage}
\title{Boundary integral equation methods for the elastic and thermoelastic waves in three dimensions}

\author{Gang Bao\thanks{School of Mathematical Sciences, Zhejiang University, Hangzhou 310027, China. Email: {\tt baog@zju.edu.cn}}\;,
Liwei Xu\thanks{School of Mathematical Sciences, University of Electronic Science and Technology of China, Chengdu, Sichuan 611731, China. Email: {\tt xul@uestc.edu.cn}}\;,
Tao Yin\thanks{Department of Computing and Mathematical Sciences, California Institute of Technology, 1200 East California Blvd., CA 91125, United States. Email:{\tt taoyin89@caltech.edu}}
}

\end{titlepage}
\maketitle

\begin{abstract}
In this paper, we consider the boundary integral equation (BIE) method for solving the exterior Neumann boundary value problems of elastic and thermoelastic waves in three dimensions based on the Fredholm integral equations of the first kind. The innovative contribution of this work lies in the proposal of the new regularized formulations for the hyper-singular boundary integral operators (BIO) associated with the time-harmonic elastic and thermoelastic wave equations. With the help of the new regularized formulations, we only need to compute the integrals with weak singularities at most in the corresponding variational forms of the boundary integral equations. The accuracy of the regularized formulations is demonstrated through numerical examples using the Galerkin boundary element method (BEM).

{\bf Keywords:} Elastic wave, thermoelastic wave, hyper-singular boundary integral operator, regularized formulation, boundary element method
\end{abstract}

\section{Introduction}
\label{sec:i}

In this paper, we apply the BIE method to solve the three dimensional time-harmonic elastic and thermoelastic scattering problems that are of great importance in many fields of applications such as geophysics, seismology, non-destructive testing and material sciences, to name a few. We are interested in the wave scattering by a bounded impenetrable obstacle immersed in an infinite isotropic solid. Based upon the assumption that any deformation of the elastic medium occurs under  a constant temperature, the displacement in the solid can be modeled by the Navier equation together with an appropriate radiation condition at infinity (\cite{KGBB79}). If considering the temperature fluctuations caused by the dynamic deformation, one arrives at the Biot system of equations (\cite{B56,KGBB79,N75})  describing   the interaction of the temperature and displacement fields.

 When considering the wave propagation in an unbounded domain, one could use the so-called Dirichlet-to-Neumann (DtN) map (or non-reflecting boundary condition) on a  closed artificial boundary which decomposes the exterior region into  two parts. After that, the original scattering problem could be solved on the bounded region. The DtN map for the elastic wave has been used for numerical simulations in the open literatures (\cite{GK90,HS98}) and some properties of the DtN map have been investigated in \cite{BHSY,L16,LY}. For the thermoelastic wave, the explicit formulation of the DtN map is still unknown. We refer to \cite{CD98,CD99,DK88,DK90} for the mathematical analysis of the thermoelastic wave. The BIE method is another conventional numerical method for solving the scattering problems, and it has been widely used in acoustics, electromagnetics, elastodynamics and thermoelastodynamics (\cite{BLR14,BXY,C00,CBS08,DB90,HW04,HW08,HX11,JWX14,LH16,L14,LR93,MB88,N01,RS,SS84,TC07,TC09,YHX}). The BIE method takes some advantages over domain discretization methods, including that the boundary integral representation of the solution fulfills the radiation condition naturally,  the dimension of the computational domain is reduced by one, and to name a few. Various numerical techniques, including the Galerkin scheme, the Nystr\"om method, the fast multipole method, and the spectral method etc., have been developed for the efficient transformation of the BIE into a linear system in the past decades. In this work, we will use the Galerkin scheme (\cite{HW04,HW08,RS}) for the numerical solutions, and its advantages include the availability of mathematical convergence analysis allowing $h$-$p$  approximations, and particularly the strength on dealing with the hyper-singularities in the boundary integrals.

During the application of the BIE method, there needs for the use of hyper-singular BIO in many situations,  containing  the removal of the pollution of eigenfrequencies for the BIE (\cite{BM71}), and the solution of the Neumann boundary value problem using the Fredholm integral equation of the first kind (\cite{GN78,N82}), etc.. Theoretical analysis indicates that the hyper-singular BIO is equivalent to the Hadamard finite part of a hyper-singular integral (\cite{HW08}) which is usually difficult to be calculated accurately. One usually needs additional treatments in numerics  for the correct evaluation of  the classically non-integrable boundary integrals arising from the hyper-singular BIO. There are already existing many works (\cite{GN78,LR93,H94,HW08,M49,M66,L14,YHX,BXY}) on this issue, and the main idea consists of rewriting the hyper-singular BIO in terms of a composition of differentiation and weakly singular operators  for the Laplace equation, the Helmholtz equation, the time-harmonic Navier equation and  the Lam\'e equation. This composition, in fact, is a regularization procedure (\cite{HW08, YHX}) of the hyper-singular distribution, and is useful for the variational formulation and related computational procedures.

In this paper, we consider the three dimensional elastic and thermoelastic scattering problems with Neumann boundary conditions. For each problem, we apply the double-layer potential to represent the solution, and then the original boundary value problem is reduced to a Fredholm boundary integral equation of the first kind with the corresponding hyper-singular BIO.  Following the work in \cite{YHX,BXY} for the two-dimensional case, and utilizing the tangential G\"unter derivative, we derive the new and analytically accurate regularized formulations for the hyper-singular BIO  associated with the three dimensional time-harmonic Navier equation and Biot system of linearized thermoelasticity, respectively. As a result, in the corresponding weak forms, all involved integrals are at most weakly-singular. This work is an extension of our work only considering two dimensional elastic waves, and the extension is in fact non-trivial. In the numerical implementations,  applying the special local coordinate system given in \cite{RS} and the Gauss quadrature rules on the triangle element, we present a semi-analytic strategy to evaluate all the weakly-singular integrals effectively. Although  we only consider $C^2$ boundary in our numerical tests, the theoretical results actually could be extended to the Lipschitz case in terms of the properties of G\"unter derivative given in \cite{BT}. The convergence analysis of the numerical scheme could be obtained following the standard techniques in \cite{HW08}, and we will omit it in this work.

The rest of this paper is organized as follows. In Section \ref{sec:ge}, we introduce the exterior elastic and thermoelastic scattering problems,  and then describe the BIE and the Galerkin BEM in Section \ref{sec:nm}. In section \ref{sec:rf}, we propose the new and analytically accurate regularized formulations for the hyper-singular BIO  in three dimensions. Finally, we discuss a semi-analytic strategy for the numerical implementation of the Galerkin scheme and present numerical results of several examples in section \ref{sec:ne}.

\section{Mathematical problems}
\label{sec:ge}
Let $\Omega\subset \R^3$ be a bounded, simply connected and impenetrable body with $C^2$ boundary $\Gamma=\partial\Omega$. The exterior complement of $\Omega$ is denoted by $\Omega^c= \R^3\setminus\overline{\Omega}\subset \R^3$. Assume that $\Omega^c$ is occupied by a linear and isotropic elastic solid characterized by the Lam\'e constants $\lambda$ and $\mu$ ($\mu>0$, $3\lambda+2\mu>0$) and mass density $\rho>0$. Let $\omega>0$ be the frequency of propagating waves.
\subsection{Elastic scattering problem (ESP)}

Assume that the temperature is always a constant and suppress the time-harmonic dependence $e^{-i\omega t}$. Then the displacement field $u$ in the solid can be modeled by the following exterior ESP: Given $f\in H^{-1/2}(\Gamma)^3$, determine $u=(u_1,u_2,u_3)^\top\in H_{loc}^1(\Omega^c)^3$ satisfying
\be
\label{Navier}
\Delta^{*}u +   \rho \omega^2u &=& 0\quad\mbox{in}\quad \Omega^c,\\
\label{BoundCond}
T(\pa,\nu)u &=& f \quad \mbox{on}\quad \Gamma,
\en
and the Kupradze radiation condition (\cite{KGBB79})
\be
\label{RadiationCond}
\lim_{r \to \infty} r\left(\frac{\partial u_t}{\partial r}-ik_tu_t\right) = 0,\quad r=|x|,\quad t=p,s,
\en
uniformly with respect to all $\hat{x}=x/|x|\in\mathbb{S}^2:=\{x\in\R^3:|x|=1\}$. Here, $\Delta^{*}$ is the Lam\'e operator defined by
\ben
\label{LameOper}
\Delta^* := \mu\,\mbox{div}\,\mbox{grad} + (\lambda + \mu)\,\mbox{grad}\, \mbox{div}\,,
\enn
and the traction operator $T(\pa,\nu)$ on the boundary is defined as
\ben
\label{stress-3D}
T(\pa,\nu)u:=2 \mu \, \partial_{\nu} u + \lambda \,
\nu \, \divv u+\mu \nu\times \curl u,\quad \nu=(\nu^1,\nu^2,\nu^3){^\top},
\enn
where $\nu$ is the outward unit normal to the boundary $\G$ and $\partial_\nu:=\nu\cdot\mbox{grad}$ is the normal derivative. In (\ref{RadiationCond}), $u_p$ and $u_s$ are referred as the compressional wave and the shear wave, respectively, and they are given by
\ben
u_p=-\frac{1}{k_p^2}\,\mbox{grad}\,\mbox{div}\;u,\quad u_s=\frac{1}{k_s^2}\,\mbox{curl}\,\mbox{curl}\;u,
\enn
where the wave numbers $k_s,k_p$ are defined as
\ben
k_s := \omega/c_p,\quad k_p := \omega/c_s,
\enn
with
\ben
c_p:=\sqrt{\mu/\rho},\quad c_s:=\sqrt{(\lambda+2\mu)/\rho}.
\enn

For the uniqueness of the ESP (\ref{Navier})-(\ref{RadiationCond}), we refer to \cite{KGBB79,BHSY}.

\subsection{Thermoelastic scattering problem (TESP)}

Now we consider the temperature fluctuations caused by the dynamic deformation. In this case,  the elastic medium $\Omega^c$ is additionally characterized by the coefficient of thermal diffusity $\kappa$ and the coupling constants $\gamma$, $\eta$ given by
\ben
\gamma=(\lambda+\frac{2}{3}\mu)\alpha,\quad \eta=\frac{T_0\gamma}{\lambda_0},
\enn
respectively,  where $\alpha$ is the volumetric thermal expansion coefficient, $T_0$ is a reference value of the absolute temperature and $\lambda_0$ is the coefficient of thermal conductivity. Denote by $\epsilon:=\gamma\eta\kappa/(\lambda+2\mu)$ the dimensionless thermoelastic coupling constant which assumes 'small' positive values for most thermoelastic media and $q=i\omega/\kappa$. Suppressing the time-harmonic dependence $e^{-i\omega t}$, the displacement field $u$ and the temperature variation field $p$ can be modeled by the following Biot system of linearized thermoelasticity
\be
\label{thermo1}
\Delta^{*}u+\rho\omega^2u-\gamma\nabla p &=& 0\quad\mbox{in}\quad \Omega^c,\\
\label{thermo2}
\Delta p+qp+i\omega\eta\nabla\cdot u &=& 0 \quad\mbox{in}\quad \Omega^c.
\en
Rewriting (\ref{thermo1})-(\ref{thermo2}) into a matrix form, we obtain
\be
\label{thermo12}
LU=0,\quad L:=\begin{bmatrix}
(\mu\Delta+\rho\omega^2)I_3+(\lambda+\mu)\nabla\nabla\cdot & -\gamma\nabla\\
q\eta\kappa\nabla\cdot & \Delta+q
\end{bmatrix},\quad U=(u^\top,p)^\top.
\en
On the boundary of the scatterer, we assume the Neumann boundary condition
\be
\label{BC}
\widetilde{T}(\pa,\nu)U:=\begin{bmatrix}
T(\pa,\nu) & -\gamma\nu \\
0 & \pa_\nu
\end{bmatrix}U=F.
\en
It follows (\cite{KGBB79}) that the wave field $U$ admits the decomposition  \ben
u=u^1+u^2+u^s,\quad p=p^1+p^2,
\enn
where the vector displacement fields $u^1,u^2,u^s$ satisfy the vectorial Helmholtz equations
\ben
\Delta u^i+k_i^2u^i=0,\quad i=1,2 \quad\mbox{and}\quad \Delta u^s+k_s^2u^s=0
\enn
with
\ben
\mbox{curl}\,u^i=0\quad i=1,2 \quad\mbox{and}\quad \mbox{div}\,u^s=0,
\enn
and the scalar temperature fields $p^1$ and $p^2$ satisfy the following scalar Helmholtz equations
\ben
\Delta p^i+k_i^2p^i=0,\quad i=1,2.
\enn
Here, the wave numbers $k_1,k_2$, corresponding to the elastothermal and thermoelastic waves  respectively,  are the roots of the characteristic system
\be
\label{k12}
k_1^2+k_2^2=q(1+\epsilon)+k_p^2,\quad k_1^2k_2^2=qk_p^2,
\en
for which $\mbox{Im}\,k_i\ge 0, i=1,2$. In particular,
\ben
k_1 &=& \frac{1}{2c_p}\sqrt{\frac{\omega}{\kappa}}\left[\sqrt{\omega\kappa +C_+} + \sqrt{\omega\kappa +C_-}\right],\\
k_2 &=& \frac{1}{2c_p}\sqrt{\frac{\omega}{\kappa}}\left[\sqrt{\omega\kappa +C_+} - \sqrt{\omega\kappa +C_-}\right],
\enn
where
\ben
C_\pm=i(1+\epsilon)c_p^2\pm (1+i)c_p\sqrt{2\omega\kappa}.
\enn
We assume that the scattered field $U$ satisfies the following Kupradze radiation conditions as $r=|x|\rightarrow \infty$ for $i=1,2,3$ and $j=1,2$
\ben
&& u^j=o(r^{-1}),\quad \pa_{x_i}u^j=O(r^{-2}),\\
&& p^j=o(r^{-1}),\quad \pa_{x_i}p^j=O(r^{-2}),\\
&& u^s=o(r^{-1}),\quad r(\pa_ru^s-ik_su^s)=O(r^{-1}).
\enn

The direct TESP to be considered in this paper is to determine  the displacement field $u$ and the temperature variation field $p$ satisfying (\ref{thermo12}), the boundary condition (\ref{BC}) and the Kupradze radiation conditions. For given $F\in (H^{-1/2}(\G))^4$, we refer to \cite{KGBB79,C00} for the uniqueness of the direct problem.

\section{Numerical method}
\label{sec:nm}
In this section, we derive the BIE  for solving the ESP and TESP, respectively and give a brief introduction to the Galerkin BEM for the discretization of the derived BIE.

\subsection{BIE for ESP}
\label{sec:bie1}

For the ESP, it follows from the potential theory (\cite{KGBB79}) that the unknown function $u$ can be represented as
\be
\label{DirectBRF1}
u(x)  = (\mathcal{D}_s\varphi)(x):=  \int_{\Gamma}(T(\pa_y,\nu_y)  E(x,y))^\top \varphi(y)\,ds_y, \quad \forall\,x\in\Omega^c,
\en
where $\mathcal{D}_s$ is referred as the double-layer potential and $E(x,y)$ is the fundamental displacement tensor of the time-harmonic Navier equation (\ref{Navier}) in $\R^3$ taking the form
\be
\label{NavierFS}
E(x,y) = \frac{1}{\mu}\gamma_{k_s}(x,y) I + \frac{1}{\rho\omega^2}\nabla_x\nabla_x^\top \left[\gamma_{k_s}(x,y) - \gamma_{k_p}(x,y)\right],\quad x\ne y.
\en
In (\ref{NavierFS}) and the following, $I$ denotes the $3\times 3$ identity matrix, and $\gamma_{k_t}(x,y)$ is the fundamental solution of the Helmholtz equation in $\R^3$ with wave number $k_t$ and takes the form
\be
\label{HelmholtzFS}
\gamma_{k_t}(x,y) =\frac{\exp(ik_t|x-y|)}{4\pi|x-y|},
\quad x\ne y,\quad t=p,s.
\en
Operating with the traction operator on (\ref{DirectBRF1}), taking the limits as $x\to\G$ and applying the jump relations and the boundary condition (\ref{BoundCond}) , we arrive at the BIE on $\G$
\be
\label{BIE1}
W_s\varphi(x)=-f,\quad x\in\Gamma,
\en
where $W_s:H^{s}(\Gamma)^3\rightarrow H^{s-1}(\Gamma)^3(s\ge 1/2)$ is called the hyper-singular BIO defined by
\be
\label{HBIO1}
W_su(x)  := -\lim_{z\rightarrow x\in\G,z\notin\G}T(\pa_z,\nu_x)\int_{\Gamma}(T(\pa_y,\nu_y)E(z,y))^\top u(y)\,ds_y,\quad x\in\Gamma.
\en
The standard weak formulation of (\ref{BIE1}) reads: Given $f\in H^{-1/2}(\Gamma)^3$, find $\varphi\in H^{1/2}(\G)^3$ such that
\be
\label{weak1}
\langle W_s\varphi,v\rangle=-\langle f,v\rangle \quad\mbox{for all}\quad v\in H^{1/2}(\G)^3.
\en
Here and in the sequel, $\langle\cdot,\cdot\rangle$ denotes the $L^2$ duality pairing between $H^{-1/2}(\G)^d$ and $H^{1/2}(\G)^d$ for $d\in\Z^+$.

\subsection{BIE for TESP}

For the TESP, it follows from the potential theory (\cite{KGBB79,C00}) that the unknown function $U$ can be represented as
\be
\label{DirectBRF2}
U(x)  = (\widetilde{\mathcal{D}}\Psi)(x):=  \int_{\Gamma}(\widetilde{T}^*(\pa_y,\nu_y) \widetilde{E}^\top(x,y))^\top \Psi(y)\,ds_y, \quad \forall\,x\in\Omega^c,
\en
where $\widetilde{E}(x,y)$ is the fundamental solution of the Biot system (\ref{thermo12}) in $\R^3$ given by
\ben
\widetilde{E}(x,y)=\begin{bmatrix}
E_{11}(x,y) & E_{12}(x,y) \\
E_{21}^\top(x,y) & E_{22}(x,y)
\end{bmatrix},
\enn
with
\ben
E_{11}(x,y)&=& \frac{1}{\mu}\gamma_{k_s}(x,y)I+\frac{1}{\rho\omega^2} \nabla_x\nabla_x^\top \left[\gamma_{k_s}(x,y)-\gamma_{k_1}(x,y) \right]\\
&-& \frac{1}{\rho\omega^2}\frac{k_p^2-k_1^2}{k_1^2-k_2^2} \nabla_x\nabla_x^\top \left[\gamma_{k_1}(x,y)-\gamma_{k_2}(x,y) \right],\\
E_{12}(x,y)&=& -\frac{\gamma}{(k_1^2-k_2^2)(\lambda+2\mu)}\nabla_x \left[ \gamma_{k_1}(x,y)-\gamma_{k_2}(x,y)\right],\\
E_{21}(x,y)&=& \frac{i\omega\eta}{(k_1^2-k_2^2)(\lambda+2\mu)}\nabla_x \left[ \gamma_{k_1}(x,y)-\gamma_{k_2}(x,y)\right],\\
E_{22}(x,y)&=& -\frac{1}{k_1^2-k_2^2}\left[ (k_p^2-k_1^2)\gamma_{k_1}(x,y)-(k_p^2-k_2^2)\gamma_{k_2}(x,y) \right],
\enn
and $\widetilde{T}^*(\pa,\nu)$ is the corresponding Neumann operator of the adjoint problem of (\ref{thermo12}) taking the form
\ben
\widetilde{T}^*(\pa,\nu):=\begin{bmatrix}
T(\pa,\nu) & -i\omega\eta\nu \\
0 & \pa_\nu
\end{bmatrix}.
\enn
Operating with the operator $\widetilde{T}$ on (\ref{DirectBRF2}), taking the limits as $x\to\G$ and applying the boundary condition (\ref{BC}) , we obtain the BIE on $\G$
\be
\label{BIE2}
\widetilde{W}_s\Psi(x)=-F,\quad x\in\G,
\en
where the hyper-singular BIO $\widetilde{W}_s$ is defined by
\be
\label{HBIO2}
\widetilde{W}_s\Psi(x) := -\lim_{z\rightarrow x\in\G,z\notin\G}\widetilde{T}(\pa_z,\nu_x)\int_\G (\widetilde{T}^*(\pa_y,\nu_y)E^\top(z,y))^\top\Psi(y)ds_y,\quad x\in\G.
\en
The standard weak formulation of (\ref{BIE2}) reads: Given $F\in H^{-1/2}(\Gamma)^4$, find $\Psi\in H^{1/2}(\G)^4$ such that
\be
\label{weak2}
\langle\widetilde{W}_s\Psi,V\rangle=-\langle F,V\rangle \quad\mbox{for all}\quad V\in H^{1/2}(\G)^4.
\en

\begin{remark}
For the wellposedness of the variational equations (\ref{weak1}) and (\ref{weak2}), we refer the readers to \cite{BXY,C00,KGBB79}. The pollution of eigenfrequencies on the uniqueness can be removed by applying the so-called Burton-Miller formulation, see \cite{BXY,BM71} for example.
\end{remark}

\subsection{Galerkin BEM}
\label{sec:gbem}

We only propose the Galerkin Scheme for solving ESP, and the corresponding procedure and formulas for TESP are quite similar and will be neglected.
Let $\mathcal{H}_h$ be a finite dimensional subspace of $H^{1/2}(\G)$. Then the Galerkin approximation of (\ref{BIE1}) reads: Given $f$, find $\varphi_h\in\mathcal{H}_h^3$ satisfying
\be
\label{galerkin}
\langle W_s\varphi_h,v_h\rangle=-\langle f,v_h\rangle \quad\mbox{for all}\quad v_h\in\mathcal{H}_h^3.
\en
In the following, we describe briefly the reduction of the Galerkin equation (\ref{galerkin}) into its discrete linear system of equations.

Let $\G_h=\cup_{i=1}^N \ov{\tau_i}$ be a uniform boundary element mesh of $\G$ where each $\tau_i$ is a plane triangle with vertex $x_{i_1},x_{i_2},x_{i_3}$ ordered counter clockwise. Let $\{x_j\}_{j=1}^M$ be the set of all nodes of the triangulation. Using the reference element
\ben
\tau_\xi=\{\xi=(\xi_1,\xi_2)^\top\in\R^2,0<\xi_1<1,\; 0<\xi_2<1-\xi_1\},
\enn
the point $x\in\tau_i$ can be parameterized as
\ben
x=x(\xi)=x_{i_1}+\xi_1(x_{i_2}-x_{i_1})+\xi_2(x_{i_3}-x_{i_1}),\quad \xi\in\tau_\xi.
\enn
Let $\{\psi_j\}_{j=1}^M$ be the set of piecewise linear basis functions. We seek the approximate solution
\ben
u_h(x)=\sum_{j=1}^M u_j\psi_j(x),
\enn
where $u_j\in\C^3, j=1,\cdots,M$ are unknown nodal values of $u_h$ at $x_j$. For the boundary value $f$, we interpolated it as
\ben
f_h=\sum_{i=1}^N f(x_{i_*})\phi_i(x),
\enn
where $x_{i_*}=(x_{i_1}+x_{i_2}+x_{i_3})/3$ is the mid point of the element $\tau_i$ and $\{\phi_i\}_{i=1}^N$ is the set of piecewise constant basis functions. Substituting the interpolation forms into (\ref{galerkin}) and setting $\psi_j, j=1,\cdots,M$ as test functions, we arrive at a linear system of equations
\be
\label{linearsys}
{\bf A}_h {\bf X}={\bf B}_h{\bf b}, {\bf A}_h\in\C^{3M\times 3M}, {\bf B}_h\in\C^{3M\times 3N}, {\bf b}=({\bf b}_1^\top,\cdots,{\bf b}_N^\top)^\top\in\C^{3N\times 1},
\en
where for $k,j=1,\cdots,M$, $i=1,\cdots,N$,
\be
\label{coematrix1}
{\bf A}_h(k,j)= -\int_{\G_h}\left[\lim_{z\rightarrow x\in\G,z\notin\G}T(\pa_z,\nu_x)\int_{\G_h}(T(\pa_y,\nu_y)E(z,y))^\top \psi_j(y)ds_y\right]\psi_k(x)ds_x,
\en
\be
\label{coematrix2}
{\bf B}_h(k,i)=  \int_{\G_h}\phi_i(x)\psi_k(x)\,ds_x\,I\in\C^{3\times3},
\en
\be
\label{coedata}
{\bf b}_i = f(x_{i_*})\in\C^{3\times1}.
\en

\section{Regularized formulation for the hyper-singular BIO}
\label{sec:rf}

In this section, we derive the new regularized formulations for the hyper-singular BIOs $W_s$ and $\widetilde{W}_s$ in three dimensions such that the coefficient matrix ${\bf A}_h$ ( or $\widetilde{\bf A}_h$) can be  evaluated in a more effective and accurate way. More precisely, using the derived regularized formulations, only classically  integrable and weakly-singular integrals are involved in the weak forms of $W_s$ and $\widetilde{W}_s$.  Before doing this, we introduce the hyper-singular BIO associated with the Helmholtz equation and the G\"unter derivatives.

\subsection{Hyper-singular BIO for acoustic scattering problem}
\label{sec:hbioa}

Consider the Helmholtz equation
\ben
\Delta p+k^2p=0\quad\mbox{in}\quad\Omega^c,
\enn
with wave number $k>0$. Denote by $V_f:H^{s-1}(\Gamma)\rightarrow H^s(\Gamma)$ and $W_f:H^s(\Gamma)\rightarrow H^{s-1}(\Gamma)$, $s\ge1/2$ the single-layer and hyper-singular BIO  defined by
\ben
V_f\psi(x) &:=& \int_{\Gamma}\gamma_k(x,y) \psi(y)\,ds_y,\quad x\in\Gamma,\\
W_f\varphi(x)  &:=& -\lim_{z\rightarrow x\in\G,z\notin\G}\nu_x\cdot\nabla_z\int_{\Gamma}\pa_{\nu_y}\gamma_k(z,y) \varphi(y)\,ds_y,\quad x\in\Gamma,
\enn
respectively. It follows from Lemma 1.2.2 in \cite{HW08} that the hyper-singular BIO $W_f$ can be expressed as
\be
\label{Wf2}
W_{f} p(x)=-(\nu_x\times\nabla_x)\cdot V_f(\nu\times\nabla p)(x)-k^2\nu_x^\top V_f(p\nu)(x).
\en

\subsection{G\"unter derivatives}
\label{sec:gd}

Now we describe the G\"unter derivatives that play essential roles in the proof of our main results. Define the operator $M(\pa,\nu)$, whose elements are also called the G\"unter derivatives, as
\ben
M(\pa,\nu)u(x)= \pa_{\nu}u -\nu(\nabla\cdot u)+\nu\times \,\curl\,u.
\enn
Then the traction operator can be rewritten as
\be \label{Tform2}
T(\pa,\nu)u(x)= (\lambda+\mu)\nu(\nabla \cdot u) + \mu\pa_{\nu}u + \mu M(\pa,\nu)u.
\en
A direct calculation yields
\ben
T(\pa,\nu)\nabla &=& (\lambda+\mu)\nu\Delta+\mu\pa_{\nu}\nabla +\mu M(\pa,\nu)\nabla\\
&=& (\lambda+\mu)\nu\Delta+\mu\pa_{\nu}\nabla -\mu M(\pa,\nu)\nabla+ 2\mu M(\pa,\nu)\nabla.
\enn
Then
\ben
\pa_{\nu}\nabla-M(\pa,\nu)\nabla = \nu\Delta,
\enn
which implies that
\be
\label{Tgrad}
T(\pa,\nu)\nabla=(\lambda+2\mu)\nu\Delta+2\mu M(\pa,\nu)\nabla.
\en

The properties of the operator $M(\pa,\nu)$ (\cite{BT,HW08,KGBB79}) shows that for any scaler fields $p,q$, vector fields $u,v$ and tensor field $E$, there hold the Stokes formulas
\be
\label{stokes1}
\int_\Gamma (m^{ij}p)\,q\,ds &=& -\int_\Gamma p\,(m^{ij}q)\,ds,\\
\label{stokes2}
\int_\Gamma (Mu)\cdot v\,ds &=& \int_\Gamma u\cdot (Mv)\,ds,\\
\label{stokes3}
\int_\Gamma (Mq)\,v\,ds &=& -\int_\Gamma q\,(Mv)\,ds,
\en
and
\be
\label{stokes4}
\int_\Gamma (ME)^\top\,vds=\int_\Gamma E^\top\,(Mv)\,ds.
\en

\subsection{Hyper-singular BIO for ESP}
\label{sec:hbioe}

We now investigate the operator $W_s$. Following the results in \cite{L14,YHX}, we have for $x\ne y$,
\be \label{TxExy}
T(\pa_x,\nu_x)E(x,y) &=& - \nu_x \nabla _x^\top [\gamma_{k_s}(x,y)-\gamma_{k_p}(x,y)] + \pa_{\nu_x}\gamma _{k_s}(x,y)I\nonumber\\
&+& M_x\left[2\mu E(x,y) - \gamma
_{k_s}(x,y)I\right],
\en
and
\be \label{TyExy}
T(\pa_y,\nu_y)E(x,y) &=& - \nu_y \nabla _y^\top [\gamma_{k_s}(x,y)-\gamma_{k_p}(x,y)] + \pa_{\nu_y}\gamma _{k_s}(x,y)I\nonumber\\
&+& M_y\left[2\mu E(x,y) - \gamma
_{k_s}(x,y)I\right].
\en

Using the G\"unter derivatives, the hyper-singular operator $W_s$ can be rewritten as
\ben
&\quad& W_su(x)\\
&=& -\lim_{z\rightarrow x\in\G,z\notin\G} \mu\nu_x\cdot\nabla_z \mathcal{D}_su(z) +(\lambda+\mu)\nu_x(\nabla_z \cdot \mathcal{D}_s u(z))+\mu M_{z,x}\mathcal{D}_s u(z),
\enn
where
\ben
M_{z,x}\psi(z)= \pa_{\nu_x}\psi -\nu_x(\nabla_z \cdot \psi)+\nu_x\times \,\curl_z\,\psi, M_{z,x}=[m_{z,x}^{ij}]_{i,j=1}^3.
\enn

\begin{theorem}
\label{main}
The hyper-singular BIO $W_s$ in three dimensions can be expressed alternatively as
\be
\label{Ws1}
W_s u(x) &=& \rho\omega^2\int_\Gamma\left[ \gamma_{k_s}(x,y)(\nu_x\nu_y^\top-\nu_x^\top\nu_yI-J_{\nu_x,\nu_y})- \gamma_{k_p}(x,y)\nu_x\nu_y^\top\right]u(y)ds_y\nonumber\\
&+& 2\mu\int_\Gamma M_x\nabla_y[\gamma_{k_s}(x,y)-\gamma_{k_p}(x,y)]\nu_y^\top u(y)ds_y \nonumber\\
&+& 2\mu\int_\Gamma \nu_x\nabla_x^\top [\gamma_{k_s}(x,y)-\gamma_{k_p}(x,y)] M_yu(y)ds_y \nonumber\\
&-& \mu\int_\Gamma \left(\nu_x\times\nabla_x\gamma_{k_s}(x,y)\right)\cdot \left(\nu_y\times\nabla_yu(y)\right)ds_y\nonumber\\
&+& 2\mu\int_\Gamma M_x\gamma_{k_s}(x,y) M_yu(y)ds_y- 4\mu^2\int_\Gamma M_xE(x,y) M_yu(y)ds_y\nonumber\\
&-& \mu\left\{ \sum_{k,l=1}^3\int_\Gamma m_x^{kl}\gamma_{k_s}(x,y) m_y^{kj}u_l(y)ds_y \right\}_{j=1}^3,
\en
where $J_{\nu_x,\nu_y}=\nu_y\nu_x^\top-\nu_x\nu_y^\top$. 
\end{theorem}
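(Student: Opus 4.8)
The plan is to begin from the decomposition obtained just above,
\[
W_s u(x) = -\lim_{z\to x}\Big[\mu\,\nu_x\cdot\nabla_z \mathcal{D}_s u(z) + (\lambda+\mu)\,\nu_x\big(\nabla_z\cdot \mathcal{D}_s u(z)\big) + \mu\, M_{z,x}\mathcal{D}_s u(z)\Big],
\]
and to insert into $\mathcal{D}_s u(z)$ the G\"unter representation (\ref{TyExy}) of the inner kernel $T(\pa_y,\nu_y)E(z,y)$. This writes $\mathcal{D}_s u(z)$ as a sum of three integral operators, with kernels $\nabla_y[\gamma_{k_s}-\gamma_{k_p}]\nu_y^\top$, $\pa_{\nu_y}\gamma_{k_s}\,I$, and the G\"unter part $M_y[2\mu E-\gamma_{k_s}I]$ respectively. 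Applying the three outer operators produces finitely many groups of terms, which I would regularize group by group; the target is to reduce every kernel to one that is at most weakly singular before taking $z\to x$.

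The workhorse is the family of Stokes formulas (\ref{stokes1})--(\ref{stokes4}): since $M$ is a first-order tangential surface operator, these identities transfer each occurrence of $M_y$ off the kernel onto the density $u$, and likewise let me carry the outer $M_{z,x}$ onto the $x$-variable after the limit, turning it into $M_x$. Every time a G\"unter derivative is integrated by parts rather than applied to a singular kernel, the order of singularity drops by one. The outer $M_{z,x}$ acting on the inner $M_y$ part is handled entirely this way and yields the three double-G\"unter integrals of (\ref{Ws1}), namely the $M_x\gamma_{k_s}M_y$, the $M_xE(x,y)M_y$, and the scalar $\sum_{k,l}m_x^{kl}\gamma_{k_s}m_y^{kj}u_l$ contributions; the matrix Stokes identity (\ref{stokes4}) is exactly what puts these into the transposed form displayed. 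The two cross terms, outer $M_{z,x}$ against the inner $\nabla_y\nu_y^\top$ part and the outer divergence against the inner $M_y$ part, give the mixed integrals $2\mu\int M_x\nabla_y[\gamma_{k_s}-\gamma_{k_p}]\nu_y^\top u$ and $2\mu\int\nu_x\nabla_x^\top[\gamma_{k_s}-\gamma_{k_p}]M_y u$ in the second and third lines.

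For the non-tangential outer pieces I would use the identity (\ref{Tgrad}), $T(\pa,\nu)\nabla=(\lambda+2\mu)\nu\Delta+2\mu M\nabla$, together with the Helmholtz equations $\Delta_x\gamma_{k_t}=-k_t^2\gamma_{k_t}$ (valid for $x\ne y$) and the translation relation $\nabla_z\gamma_{k_t}(z,y)=-\nabla_y\gamma_{k_t}(z,y)$. The Laplacians then collapse against the wave numbers, converting the strongly singular gradient-of-divergence contributions coming from the $\nabla_x\nabla_x^\top[\gamma_{k_s}-\gamma_{k_p}]$ part of $E$ into the weakly singular, $\rho\omega^2$-weighted integrals of the first line, in which the antisymmetric combination $J_{\nu_x,\nu_y}=\nu_y\nu_x^\top-\nu_x\nu_y^\top$ arises from the vector triple products produced by the $\curl$ parts. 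The remaining double normal derivative $\nu_x\cdot\nabla_z(\pa_{\nu_y}\gamma_{k_s})$ is, up to the factor $I$, the scalar acoustic hyper-singular kernel, so the regularization (\ref{Wf2}) rewrites it as the tangential pairing $-\mu(\nu_x\times\nabla_x\gamma_{k_s})\cdot(\nu_y\times\nabla_y u)$ of the fourth line plus a $k_s^2$ mass term feeding the remaining $\gamma_{k_s}$ contributions of the first line.

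The main obstacle is precisely the most singular contribution, the part of the kernel built from $\nabla_x\nabla_x^\top[\gamma_{k_s}-\gamma_{k_p}]$ that is differentiated by both traction operators and is formally of order $|x-y|^{-3}$. Regularizing it forces one to use (\ref{Tgrad}), the Helmholtz equation and the Stokes formulas in tandem, and the delicate part is the bookkeeping: tracking the transposes, which variable each gradient and each normal refers to, and the exact prefactors $\rho\omega^2$, $2\mu$ and $4\mu^2$, so that the cancellations yielding the antisymmetric $J_{\nu_x,\nu_y}$ term come out correctly. Finally one must justify interchanging the limit $z\to x$ with the surface integral, which is legitimate only once each group has been reduced to an at-most weakly singular kernel; I would verify this termwise at the end.
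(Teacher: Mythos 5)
Your route is the paper's route in outline --- insert (\ref{TyExy}) into $\mathcal{D}_su$, write the outer traction in G\"unter form, collapse the $\nabla\nabla^\top$ blocks with (\ref{Tgrad}) and $\Delta_z\gamma_{k_t}=-k_t^2\gamma_{k_t}$, treat the double normal derivative by (\ref{Wf2}), rewrite $T(\pa_z,\nu_x)[2\mu E-\gamma_{k_s}I]$ by (\ref{TxExy}), and shift the inner $M_y$ by the Stokes formulas --- but there is a genuine gap at the one step that is actually delicate. After the groups you describe are processed, there survives the mixed term
\[
h_1(z)=\int_\G\Big[M_{z,x}\,\pa_{\nu_y}\gamma_{k_s}(z,y)\,u(y)+(\nu_x\cdot\nabla_z)\gamma_{k_s}(z,y)\,M_yu(y)\Big]\,ds_y,
\]
and your plan asserts that all such $M_{z,x}$-contributions are ``handled entirely'' by (\ref{stokes1})--(\ref{stokes4}). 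They are not: $m_{z,x}^{ij}=\nu_x^i\pa_{z_j}-\nu_x^j\pa_{z_i}$ differentiates in the off-surface variable $z$ with the normal frozen at $x$, so it is not a tangential surface operator in the integration variable $y$ and cannot be integrated by parts over $\G$; even after using $\nabla_z\gamma=-\nabla_y\gamma$ one obtains $\nu_x^i\pa_{y_j}-\nu_x^j\pa_{y_i}$, which differs from $m_y^{ij}$ because the normal is the wrong one. The paper's proof (Appendix A) closes exactly this gap with a commutator identity for G\"unter derivatives with mismatched normals,
\[
\sum_{l=1}^3\big(m_y^{il}m_{z,x}^{lj}-m_{z,x}^{il}m_y^{lj}\big)
=(\nu_y^i\nu_x^j-\nu_x^i\nu_y^j)\Delta_z+m_{z,x}^{ij}\pa_{\nu_y}-m_y^{ij}(\nu_x\cdot\nabla_z),
\]
an identity absent from your toolkit and the one ingredient that renders $h_1$ weakly singular.

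This identity is also the true source of two terms you misattribute. Applied to $\gamma_{k_s}$ with $\Delta_z\gamma_{k_s}=-k_s^2\gamma_{k_s}$ and $\mu k_s^2=\rho\omega^2$, its $\Delta_z$ part generates the antisymmetric $J_{\nu_x,\nu_y}$ contribution in the first line of (\ref{Ws1}) --- not the ``vector triple products produced by the curl parts'' of the $\nabla_x\nabla_x^\top$ collapse, as you claim --- while its commutator part, combined with (\ref{stokes1}), produces the scalar sum $\sum_{k,l}\int_\G m_x^{kl}\gamma_{k_s}\,m_y^{kj}u_l\,ds_y$ of the last line, which you instead ascribe to the outer-$M$-on-inner-$M$ block via (\ref{stokes4}). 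The coefficient bookkeeping reflects this too: the $g_3$-block alone yields $3\mu\int_\G M_x\gamma_{k_s}M_yu\,ds_y$, and only the $-\mu h_1$ correction brings it down to the $2\mu$ appearing in (\ref{Ws1}). The other leftover, $h_2(z)=\nu_x\int_\G\big[\nabla_z^\top\pa_{\nu_y}\gamma_{k_s}\,u-\nabla_z^\top\gamma_{k_s}\,M_yu\big]ds_y$, is milder and is annihilated by the $y$-variable analogue of $\pa_\nu\nabla-M\nabla=\nu\Delta$, which you do in effect possess through (\ref{Tgrad}). In short: same skeleton as the paper, but the single new identity on which the proof actually turns is missing, and as written the Stokes-formulas-only treatment of the mixed singular terms would fail.
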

\begin{proof}
See \ref{appendex.a}.
\end{proof}

\subsection{Hyper-singular BIO for TESP}

Now we consider the operator $\widetilde{W}_s$. Note that the hyper-singular kernel of $\widetilde{W}_s$ is
\ben
\begin{bmatrix}
W_{11}(x,y;z) & W_{12}(x,y;z) \\
W_{21}^\top(x,y;z) & W_{22}(x,y;z)
\end{bmatrix}
\enn
where
\ben
W_{11}(x,y;z) &=& T(\pa_z,\nu_x)(T(\pa_y,\nu_y)E_{11}(z,y))^\top- i\omega\eta T(\pa_z,\nu_x)E_{12}(z,y)\nu_y^\top \\
&-& \gamma\nu_x(T(\pa_y,\nu_y)E_{21}(z,y))^\top +i\omega\eta\gamma\nu_x\nu_y^\top E_{22}(z,y),\\
W_{12}(x,y;z) &=& T(\pa_z,\nu_x)\pa_{\nu_y}E_{12}(z,y)-\gamma\nu_x\pa_{\nu_y}E_{22}(z,y),\\
W_{21}(x,y;z) &=& (\nu_x\cdot\pa_z)T(\pa_y,\nu_y)E_{21}(z,y)-i\omega\eta\nu_y(\nu_x\cdot\pa_z)E_{22}(z,y),\\
W_{22}(x,y;z) &=& (\nu_x\cdot\pa_z)\pa_{\nu_y}E_{22}(z,y).
\enn
For $U=(u^\top,p)$ and $V=(v^\top,q)$, we have
\be
\label{weakW}
\widetilde{W}_sU &=& [W_1u+ W_2p;  W_3u+W_4p]
\en
where
\ben
W_1u(x) &=& -\lim_{z\rightarrow x\in\G,z\notin\G}\int_\G W_{11}(x,y;z)u(y)ds_y,\\
W_2p(x) &=& -\lim_{z\rightarrow x\in\G,z\notin\G}\int_\G W_{12}(x,y;z)p(y)ds_y,\\
W_3u(x) &=& -\lim_{z\rightarrow x\in\G,z\notin\G}\int_\G W_{21}^\top(x,y;z)u(y)ds_y,\\
W_4p(x) &=& -\lim_{z\rightarrow x\in\G,z\notin\G}\int_\G W_{22}(x,y;z)p(y)ds_y.
\enn

\begin{lemma}
\label{Tlemma1}
For $x\ne y$, it follows that
\be
\label{TxE11}
&\quad& T(\pa_x,\nu_x)E_{11}(x,y)\nonumber\\
&=&-\nu_x\nabla_{x}^\top [\gamma_{k_s}(x,y)-\gamma_{k_1}(x,y)] + \frac{k_2^2-q}{k_1^2-k_2^2} \nu_x\nabla_{x}^\top[\gamma_{k_1}(x,y)-\gamma_{k_2}(x,y)] \nonumber\\
&+& \partial_{\nu_x}\gamma_{k_s}(x,y)I+ M_x[2\mu E_{11}(x,y)-\gamma_{k_s}(x,y)I],
\en
and
\be
\label{TyE11}
&\quad& T(\pa_y,\nu_y)E_{11}(x,y)\nonumber\\
&=&-\nu_y\nabla_{y}^\top [\gamma_{k_s}(x,y)-\gamma_{k_1}(x,y)]+ \frac{k_2^2-q}{k_1^2-k_2^2} \nu_y\nabla_{y}^\top[\gamma_{k_1}(x,y)-\gamma_{k_2}(x,y)] \nonumber\\
&+& \partial_{\nu_y}\gamma_{k_s}(x,y)I+ M_y[2\mu E_{11}(x,y)-\gamma_{k_s}(x,y)I].
\en
\end{lemma}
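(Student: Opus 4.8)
The plan is to treat (\ref{TxE11}) and (\ref{TyE11}) as pointwise differential identities valid for $x\ne y$ (no limits are involved at this stage) and to prove them by applying the traction operator directly to the explicit kernel $E_{11}$, following the strategy already used for the purely elastic kernel in (\ref{TxExy}). Writing $E_{11}(x,y)=\frac1\mu\gamma_{k_s}(x,y)I+\nabla_x\nabla_x^\top\Phi(x,y)$ with the scalar potential
\[
\Phi=\frac{1}{\rho\omega^2}(\gamma_{k_s}-\gamma_{k_1})-\frac{1}{\rho\omega^2}\frac{k_p^2-k_1^2}{k_1^2-k_2^2}(\gamma_{k_1}-\gamma_{k_2}),
\]
I would apply $T(\pa_x,\nu_x)$ column by column, using the G\"unter-derivative representation (\ref{Tform2}) on the isotropic part $\frac1\mu\gamma_{k_s}I$ and the identity (\ref{Tgrad}) on each column $\nabla_x(\pa_{x_j}\Phi)$ of the Hessian part.

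First I would collect the two pieces whose assembly is immediate. The normal-derivative contribution from $\frac1\mu\gamma_{k_s}I$ produces the term $\pa_{\nu_x}\gamma_{k_s}(x,y)I$, and the G\"unter contributions combine, using $2\mu E_{11}=2\gamma_{k_s}I+2\mu\nabla_x\nabla_x^\top\Phi$, into $M_x[2\mu E_{11}(x,y)-\gamma_{k_s}(x,y)I]$; these are exactly the last two terms of (\ref{TxE11}). What remains are the ``gradient'' contributions, all of the form $\nu_x$ times a row gradient of a scalar: from (\ref{Tform2}) the term $\frac{\lambda+\mu}{\mu}\nu_x\nabla_x^\top\gamma_{k_s}$, and from (\ref{Tgrad}) the term $(\lambda+2\mu)\nu_x\nabla_x^\top(\Delta_x\Phi)$. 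It then suffices to verify the scalar identity
\[
\tfrac{\lambda+\mu}{\mu}\gamma_{k_s}+(\lambda+2\mu)\Delta_x\Phi=-(\gamma_{k_s}-\gamma_{k_1})+\tfrac{k_2^2-q}{k_1^2-k_2^2}(\gamma_{k_1}-\gamma_{k_2}),
\]
since applying $\nu_x\nabla_x^\top$ to both sides reproduces the first two terms of (\ref{TxE11}).

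The heart of the argument, and the step I expect to be the main obstacle, is this last scalar identity, i.e.\ the wave-number bookkeeping. Using $\Delta_x\gamma_{k_t}=-k_t^2\gamma_{k_t}$ for $x\ne y$, together with the relations $\rho\omega^2=\mu k_s^2=(\lambda+2\mu)k_p^2$ (so that $\frac{\lambda+\mu}{\mu}=\frac{k_s^2-k_p^2}{k_p^2}$ and $\frac{\lambda+2\mu}{\rho\omega^2}=\frac1{k_p^2}$), one reduces $\Delta_x\Phi$ to an explicit linear combination of $\gamma_{k_s},\gamma_{k_1},\gamma_{k_2}$. The $\gamma_{k_s}$ coefficients cancel to $-1$ automatically; matching the $\gamma_{k_1}$ and $\gamma_{k_2}$ coefficients against the right-hand side, however, forces the single algebraic condition $k_1^2k_2^2=qk_p^2$, which is precisely the second characteristic relation in (\ref{k12}). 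This is the genuinely new ingredient compared with the elastic case, where no coupling constant $q$ appears, and it is what makes the appearance of the factor $\frac{k_2^2-q}{k_1^2-k_2^2}$ non-accidental rather than a mere bookkeeping coincidence.

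Finally, (\ref{TyE11}) follows by the same computation: since each $\gamma_{k_t}(x,y)$ depends only on $|x-y|$, one has $\nabla_x\nabla_x^\top\gamma_{k_t}=\nabla_y\nabla_y^\top\gamma_{k_t}$ and $\Delta_x\gamma_{k_t}=\Delta_y\gamma_{k_t}$, so $E_{11}$ may equally be written with $y$-derivatives and the argument above applies verbatim with $x$ replaced by $y$, producing the $M_y$- and $\nu_y$-based formula. Both identities then feed directly into the regularization of $\widetilde{W}_s$ exactly as (\ref{TxExy})--(\ref{TyExy}) do for $W_s$.
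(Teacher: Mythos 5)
Your proposal is correct and matches the paper's own proof in all essentials: the paper's Appendix~B uses the same decomposition $E_{11}=\frac{1}{\mu}\gamma_{k_s}I+\frac{1}{\rho\omega^2}\nabla_x\nabla_x^\top R_1$ (your $\Phi=R_1/(\rho\omega^2)$), carries out componentwise exactly the computation you perform via (\ref{Tform2}) and (\ref{Tgrad}), and reduces to the same scalar identity for $\frac{\lambda+\mu}{\mu}\gamma_{k_s}+\frac{\lambda+2\mu}{\rho\omega^2}\Delta_x R_1$, where the rewriting $(k_p^2-k_2^2)k_1^2=(k_1^2-q)k_p^2$ uses precisely the characteristic relation $k_1^2k_2^2=qk_p^2$ from (\ref{k12}) that you identified as the key ingredient. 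The identity (\ref{TyE11}) is likewise obtained by the symmetry of $\gamma_{k_t}$ in $x$ and $y$, which the paper also invokes (implicitly, by stating the proof is ``similar'').
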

\begin{proof}
See \ref{appendex.b}.
\end{proof}

We first investigate the term $W_1u$. Observe that the first term in $W_{11}$ is consistent with the the kernel of hyper-singular BIO $W_s$. For $W_1u$, we have the following regularized formulation.

\begin{theorem}
\label{main1}
The hyper-singular operator $W_1$ can be expressed alternatively as
\be
\label{W1-1}
W_1 u(x) &=&\rho\omega^2 \int_\G \gamma_{k_s}(x,y) (\nu_x\nu_y^\top-\nu_x^\top\nu_yI-J_{\nu_x,\nu_y}) u(y)ds_y \nonumber\\
&+& \int_\G \left[C_1\gamma_{k_1}(x,y)-C_2\gamma_{k_1}(x,y) \right]\nu_x\nu_y^\top u(y)ds_y \nonumber\\
&-& \mu\left\{ \sum_{k,l=1}^3\int_\G m_x^{kl}\gamma_{k_s}(x,y) m_y^{kj}u_l(y)ds_y \right\}_{j=1}^3 \nonumber\\
&-& \mu\int_\G \left(\nu_x\times\nabla_x\gamma_{k_s}(x,y)\right)\cdot \left(\nu_y\times\nabla_yu(y)\right)ds_y\nonumber\\
&+& 2\mu\int_\G M_x\gamma_{k_s}(x,y) M_yu(y)ds_y -4\mu^2\int_\G M_xE_{11}(x,y) M_yu(y)ds_y\nonumber\\
&+& 2\mu\int_\G \nu_x\nabla_x^\top \left[\gamma_{k_s}(x,y) -\gamma_{k_1}(x,y)\right] M_yu(y)ds_y \nonumber\\
&+& 2\mu\int_\G M_x\nabla_y \left[\gamma_{k_s}(x,y) -\gamma_{k_1}(x,y)\right]\nu_y^\top u(y)ds_y \nonumber\\
&+& C_3\int_\G \nu_x\nabla_x^\top \left[\gamma_{k_1}(x,y) -\gamma_{k_2}(x,y)\right] M_yu(y)ds_y  \nonumber\\
&+& C_3\int_\G M_x\nabla_y \left[\gamma_{k_1}(x,y) -\gamma_{k_2}(x,y)\right]\nu_y^\top u(y)ds_y.
\en
Here, the constants $C_i, i=1,2,3$ are given by
\ben
C_1=\frac{i\omega\eta\gamma(k_p^2+k_1^2)-k_1^2(k_1^2-q)(\lambda+2\mu)} {k_1^2-k_2^2},\\ C_2=\frac{i\omega\eta\gamma(k_p^2+k_2^2)-k_2^2(k_2^2-q)(\lambda+2\mu)} {k_1^2-k_2^2}
\enn
and
\ben
C_3=\frac{2\mu}{k_1^2-k_2^2}\left( \frac{i\omega\eta\gamma}{\lambda+2\mu} -k_2^2+q \right).
\enn
\end{theorem}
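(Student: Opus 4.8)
The plan is to exploit the structural parallel, noted just above, between the kernel $W_{11}$ and the elastic kernel handled in Theorem \ref{main}. First I would split $W_1 u$ according to the four additive pieces of $W_{11}(x,y;z)$: the ``elastic-like'' piece $T(\pa_z,\nu_x)(T(\pa_y,\nu_y)E_{11}(z,y))^\top$ together with the three coupling pieces carrying the prefactors $i\omega\eta$, $\gamma$, and $i\omega\eta\gamma$. For the elastic-like piece I would reproduce the computation behind Theorem \ref{main} essentially verbatim, but feeding in Lemma \ref{Tlemma1} in place of the identities \ref{TxExy}--\ref{TyExy}. Since $E_{11}$ shares the part $\frac{1}{\mu}\gamma_{k_s}I$ with $E$, all $\gamma_{k_s}$-contributions — the G\"unter double-layer terms, the tangential-curl term, and the leading $\rho\omega^2\gamma_{k_s}(\nu_x\nu_y^\top-\nu_x^\top\nu_y I-J_{\nu_x,\nu_y})$ kernel — come out identical to \ref{Ws1}. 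The genuinely new ingredients are the replacement of $\gamma_{k_p}$ by $\gamma_{k_1}$ (and of $E$ by $E_{11}$) and the extra term $\frac{k_2^2-q}{k_1^2-k_2^2}\nu\nabla^\top[\gamma_{k_1}-\gamma_{k_2}]$ in Lemma \ref{Tlemma1}, which is the origin of the $C_3$ integrals.

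The heart of the argument is the reduction of each second-derivative kernel to a weakly singular one, which I would carry out with the three devices already used in the acoustic Maue identity \ref{Wf2} and in Theorem \ref{main}. Device (i): decompose the outer traction through \ref{Tform2} into its divergence, normal-derivative, and G\"unter parts, and rewrite $T(\pa_z,\nu_x)\nabla_z$ as $(\lambda+2\mu)\nu_x\Delta_z+2\mu M_{z,x}\nabla_z$ via \ref{Tgrad}; here I freely use $\nabla_z\gamma_{k_i}(z,y)=-\nabla_y\gamma_{k_i}(z,y)$ to exchange inner and outer gradients. Device (ii): apply the Helmholtz equation $\Delta_z\gamma_{k_i}(z,y)=-k_i^2\gamma_{k_i}(z,y)$ for $z\ne y$ to collapse each Laplacian into a pointwise multiple of $\gamma_{k_i}$, so that the strongest surviving kernels are of order $|x-y|^{-1}$. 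Device (iii): transfer the remaining G\"unter derivatives off the kernel and onto the density through the Stokes formulas \ref{stokes2}--\ref{stokes4}, turning $M_{z,x}$ acting on the kernel into $M_y$ acting on $u$, which is what renders the double-G\"unter and curl terms weakly singular.

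For the three coupling pieces I would apply $T(\pa_z,\nu_x)$ directly to $E_{12},E_{21},E_{22}$. The simplification here is that these kernels contain only the \emph{difference} $\gamma_{k_1}-\gamma_{k_2}$ (and, for $E_{22}$, a bounded combination of $\gamma_{k_1},\gamma_{k_2}$), whose $1/|x-y|$ singularities cancel; consequently $E_{12}$, $E_{21}$ are already bounded and $\nabla[\gamma_{k_1}-\gamma_{k_2}]$ is $O(1)$. Using \ref{Tgrad}, each single-gradient coupling piece splits into a $\nu_x\Delta_z[\gamma_{k_1}-\gamma_{k_2}]$ part — which by device (ii) becomes a pointwise $\nu_x\nu_y^\top$ kernel contributing to $C_1,C_2$ — and a residual $M_{z,x}\nabla_z[\gamma_{k_1}-\gamma_{k_2}]$ part that is weakly singular and, after the Stokes transfer, merges with the matching remnant of the elastic-like piece into the $C_3$ integrals; the undifferentiated $E_{22}$ term feeds the $\nu_x\nu_y^\top$ kernel directly. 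Collecting the algebraic prefactors — the $k_i^2$ injected by each Helmholtz substitution, the coefficients built into $E_{11},E_{12},E_{21},E_{22}$, and the factor $\tfrac{k_2^2-q}{k_1^2-k_2^2}$ from Lemma \ref{Tlemma1} — then yields exactly $C_1,C_2,C_3$.

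The hard part will be the bookkeeping in devices (ii)--(iii): keeping the $\gamma_{k_1}$ and $\gamma_{k_2}$ contributions separate while tracking how each Laplacian substitution injects a distinct wave number $k_1^2,k_2^2,k_p^2$ or $q$, and then verifying that the accumulated prefactors from the elastic-like piece and the three coupling pieces collapse into the compact constants $C_1,C_2,C_3$. In particular, one must check that the terms carrying $i\omega\eta$ and $\gamma$, assisted by the characteristic relations \ref{k12}, combine so that \emph{no} residual kernel of order $|x-y|^{-2}$ or worse survives; this cancellation, rather than any individual estimate, is the delicate point of the proof.
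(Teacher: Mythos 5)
Your proposal is correct and takes essentially the same route as the paper's proof in Appendix \ref{appendex.c}: the paper likewise splits $W_{11}$ into the elastic-like term plus the three coupling terms, reruns the Theorem \ref{main} computation with Lemma \ref{Tlemma1} substituted for (\ref{TxExy})--(\ref{TyExy}), and reduces the coupling kernels via (\ref{Tgrad}), the Helmholtz substitution $\Delta_z\gamma_{k_i}(z,y)=-k_i^2\gamma_{k_i}(z,y)$, and the Stokes formulas. The prefactors are then collected exactly as you describe, with the $(k_2^2-q)$ remnant of the elastic-like piece and the $i\omega\eta\gamma/(\lambda+2\mu)$ remnant of the coupling pieces merging into $C_3$, and the characteristic relations (\ref{k12}) entering through the evaluation of $\Delta_x R_1$ in Appendix \ref{appendex.b}.
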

\begin{proof}
See \ref{appendex.c}.
\end{proof}

Next we investigate the terms $W_2p$ and $W_3u$. We have

\begin{theorem}
\label{main2}
The hyper-singular operators $W_2$ and $W_3$ can be expressed as
\be
\label{W2-1}
&\quad& W_2p(x) \nonumber\\
&=& -\frac{\gamma k_p^2\nu_x}{k_1^2-k_2^2}\int_\G \pa_{\nu_y}\left( \gamma_{k_1}(x,y)-\gamma_{k_2}(x,y)\right)p(y)ds_y\nonumber\\
&+& \frac{2\mu\gamma}{(k_1^2-k_2^2)(\lambda+2\mu)} M_x\int_\G M_yp(y)\nabla_y(\gamma_{k_1}(x,y)-\gamma_{k_2}(x,y))ds_y\nonumber\\
&+& \frac{2\mu\gamma}{(k_1^2-k_2^2)(\lambda+2\mu)}M_x \int_\G (k_1^2\gamma_{k_1}(x,y)-k_2^2\gamma_{k_2}(x,y))\nu_yp(y)ds_y,
\en
and
\be
\label{W3-1}
&\quad& W_3u(x) \nonumber\\
&=& -\frac{i\omega\eta k_p^2}{k_1^2-k_2^2}\int_\G\pa_{\nu_x}\left( \gamma_{k_1}(x,y)-\gamma_{k_2}(x,y)\right)\nu_y^\top u(y)ds_y\nonumber\\
&-& \frac{2i\mu\omega\eta}{(k_1^2-k_2^2)(\lambda+2\mu)} \int_\G M_x\nabla_x(\gamma_{k_1}(x,y)-\gamma_{k_2}(x,y))\cdot M_yu(y)ds_y\nonumber\\
&+& \frac{2i\mu\omega\eta}{(k_1^2-k_2^2)(\lambda+2\mu)}\int_\G (k_1^2\gamma_{k_1}(x,y)-k_2^2\gamma_{k_2}(x,y))\nu_x^\top M_yu(y)ds_y,
\en
respectively. 
\end{theorem}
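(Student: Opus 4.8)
The plan is to handle $W_2$ and $W_3$ by the same mechanism used for $W_1$ in Theorem~\ref{main1}, the crucial structural observation being that the two coupling blocks are gradients of a single scalar kernel. Writing $g(x,y):=\gamma_{k_1}(x,y)-\gamma_{k_2}(x,y)$, one has $E_{12}=-\frac{\gamma}{(k_1^2-k_2^2)(\lambda+2\mu)}\nabla_z g$ and $E_{21}=\frac{i\omega\eta}{(k_1^2-k_2^2)(\lambda+2\mu)}\nabla_z g$, while $E_{22}$ is a linear combination of $\gamma_{k_1}$ and $\gamma_{k_2}$. The engine of the whole argument is the identity (\ref{Tgrad}), which for a scalar $\phi$ reads $T(\pa,\nu)\nabla\phi=(\lambda+2\mu)\nu\Delta\phi+2\mu M(\pa,\nu)\nabla\phi$ and remains valid with mismatched normal and differentiation variable (e.g. $T(\pa_z,\nu_x)\nabla_z=(\lambda+2\mu)\nu_x\Delta_z+2\mu M_{z,x}\nabla_z$), together with $\Delta\gamma_{k_i}=-k_i^2\gamma_{k_i}$ and the antisymmetry $\nabla_x g=-\nabla_y g$.

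For $W_2$ I would first apply $T(\pa_z,\nu_x)$ to $E_{12}$ via (\ref{Tgrad}); the Laplacian term is immediately converted to the weakly singular kernel $\nu_x(k_1^2\gamma_{k_1}-k_2^2\gamma_{k_2})$, and since $\pa_{\nu_y}$ acts only in $y$ it commutes through $T(\pa_z,\nu_x)$ and $M_{z,x}$. The decisive step is to add this scalar $\nu_x$-directed part to the $-\gamma\nu_x\pa_{\nu_y}E_{22}$ contribution in $W_{12}$: inserting the explicit $E_{22}$, the coefficients telescope as $(k_1^2\gamma_{k_1}-k_2^2\gamma_{k_2})+(k_p^2-k_1^2)\gamma_{k_1}-(k_p^2-k_2^2)\gamma_{k_2}=k_p^2 g$, which produces exactly the prefactor $-\gamma k_p^2/(k_1^2-k_2^2)$ of the first term in (\ref{W2-1}). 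For the surviving $M_{z,x}$-part I would pass to the limit $z\to x$, pull the resulting $M_x$ outside the $y$-integral, and rewrite the kernel through $\pa_{\nu_y}\nabla_x g=-\pa_{\nu_y}\nabla_y g=\nu_y(k_1^2\gamma_{k_1}-k_2^2\gamma_{k_2})-M_y\nabla_y g$; the first piece yields the third term of (\ref{W2-1}), while the remaining $\int_\G p\,(M_y\nabla_y g)\,ds_y$ is transferred onto the density by the Stokes formula (\ref{stokes3}), $\int_\G(M_y p)\nabla_y g\,ds_y=-\int_\G p\,(M_y\nabla_y g)\,ds_y$, producing the second term.

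The treatment of $W_3$ is entirely parallel with the variables interchanged: here the outer operator is the scalar $\nu_x\cdot\pa_z$ (the fourth row of $\widetilde{T}$), whereas the full traction $T(\pa_y,\nu_y)$ acts on $E_{21}$. I would evaluate $T(\pa_y,\nu_y)E_{21}$ by first using $\nabla_z g=-\nabla_y g$ and then (\ref{Tgrad}) in the $y$-variable, again replacing $\Delta_y g$ by $-(k_1^2\gamma_{k_1}-k_2^2\gamma_{k_2})$, and only afterwards apply $\nu_x\cdot\pa_z$ and let $z\to x$. Combining the $\nu_y$-directed scalar part with the $-i\omega\eta\nu_y(\nu_x\cdot\pa_z)E_{22}$ term reproduces the same telescoping to $k_p^2 g$, hence the first term of (\ref{W3-1}). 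After transposing $W_{21}$, the $M_y$ coming from $T(\pa_y,\nu_y)$ is moved onto $u$ by the symmetric formula (\ref{stokes2}), $\int_\G(M_y V)^\top u\,ds_y=\int_\G V\cdot M_y u\,ds_y$, and expanding the remaining kernel as $\pa_{\nu_x}\nabla_y g=-\pa_{\nu_x}\nabla_x g=\nu_x(k_1^2\gamma_{k_1}-k_2^2\gamma_{k_2})-M_x\nabla_x g$ delivers precisely the last two terms of (\ref{W3-1}).

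I expect the main difficulty to be organizational rather than conceptual: tracking which argument ($z$ or $x$ versus $y$) each operator differentiates, inserting the sign from $\nabla_x g=-\nabla_y g$ correctly every time a derivative is transferred between arguments, and respecting the order of the outer limit $z\to x$ relative to pulling $M_x$ out of the integral. The one genuinely structural point that must be verified is the telescoping of the scalar $\nu$-directed contributions to the single factor $k_p^2 g$, since this is exactly what leaves all residual kernels at worst weakly singular; notably it depends only on the explicit form of $E_{22}$ and not on the characteristic relations (\ref{k12}). Everything else reduces to the commutation of the scalar operators $\pa_{\nu_y}$ and $\nu_x\cdot\pa_z$ with $T$ and $M$, and to repeated use of the Stokes formulas (\ref{stokes1})--(\ref{stokes4}) recorded above.
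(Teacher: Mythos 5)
Your proposal is correct and follows essentially the same route as the paper's proof: apply (\ref{Tgrad}) to the gradient kernels $E_{12}$ and $E_{21}$, let the $\nu$-directed Laplacian contributions telescope with the $E_{22}$ terms to the single factor $k_p^2(\gamma_{k_1}-\gamma_{k_2})$, split the remaining kernel via $\pa_{\nu}\nabla-M\nabla=\nu\Delta$ together with $\Delta\gamma_{k_i}=-k_i^2\gamma_{k_i}$, and transfer $M_y$ onto the density by the Stokes formulas (\ref{stokes2})--(\ref{stokes3}). Your explicit derivation of (\ref{W3-1}), which the paper omits as ``similar,'' carries out exactly this scheme with the correct signs from $\nabla_x g=-\nabla_y g$.
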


\begin{proof}
It follows that
\ben
&\quad& T(\pa_z,\nu_x)\pa_{\nu_y}E_{12}(z,y)-\gamma\nu_x\pa_{\nu_y}E_{22}(z,y)\\
&=& \frac{\gamma\nu_x}{k_1^2-k_2^2}\pa_{\nu_y}\left[ k_1^2\gamma_{k_1}(z,y)-k_2^2\gamma_{k_2}(z,y)+ (k_p^2-k_1^2)\gamma_{k_1}(z,y)-(k_p^2-k_2^2)\gamma_{k_2}(z,y)\right] \\
&-& \frac{2\mu\gamma}{(k_1^2-k_2^2)(\lambda+2\mu)},
\enn
and
\ben
&\quad& M_{z,x}\pa_{\nu_y}\nabla_z(\gamma_{k_1}(z,y)-\gamma_{k_2}(z,y))\\
&=& \frac{\gamma k_p^2\nu_x}{k_1^2-k_2^2}\pa_{\nu_y}\left( \gamma_{k_1}(z,y)-\gamma_{k_2}(z,y)\right) + \frac{2\mu\gamma}{(k_1^2-k_2^2)(\lambda+2\mu)} M_{z,x}\pa_{\nu_y}\nabla_y(\gamma_{k_1}(z,y)-\gamma_{k_2}(z,y))\\
&=& \frac{\gamma k_p^2\nu_x}{k_1^2-k_2^2}\pa_{\nu_y}\left( \gamma_{k_1}(z,y)-\gamma_{k_2}(z,y)\right)+ \frac{2\mu\gamma}{(k_1^2-k_2^2)(\lambda+2\mu)} M_{z,x}M_y\nabla_y(\gamma_{k_1}(z,y)-\gamma_{k_2}(z,y))\\
&-& \frac{2\mu\gamma\nu_y}{(k_1^2-k_2^2)(\lambda+2\mu)}M_{z,x} (k_1^2\gamma_{k_1}(z,y)-k_2^2\gamma_{k_2}(z,y)),
\enn
which further implies (\ref{W2-1}) by the Stokes formulas (\ref{stokes3}). The proof of (\ref{W3-1}) is similar and we omit it here.
\end{proof}

Finally, we investigate the term $W_4p$. From the results for acoustic wave (\ref{Wf2}), we immediately conclude that
\begin{theorem}
\label{main3}
The hyper-singular operator $W_4$ can be expressed as
\be
\label{W4-1}
&\quad& W_4 p(x) \nonumber\\
&=& \frac{1}{k_1^2-k_2^2}\int_\G \left(\nu_x\times\nabla_x [(k_p^2-k_1^2)\gamma_{k_1}(x,y)- (k_p^2-k_2^2)\gamma_{k_2}(x,y)] \right)\cdot (\nu_y\times\nabla_y p(y))ds_y \nonumber\\
&+& \frac{1}{k_1^2-k_2^2}\int_\G [k_1^2(k_p^2-k_1^2)\gamma_{k_1}(x,y)- k_2^2(k_p^2-k_2^2)\gamma_{k_2}(x,y)]\nu_x^\top \nu_y p(y)ds_y.
\en
\end{theorem}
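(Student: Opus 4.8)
The plan is to exploit the fact that the kernel $W_{22}(x,y;z)=(\nu_x\cdot\pa_z)\pa_{\nu_y}E_{22}(z,y)$ is purely scalar: it has exactly the structure of the acoustic hyper-singular kernel $-\lim_{z\to x}\nu_x\cdot\nabla_z\,\pa_{\nu_y}\gamma_k(z,y)$ from Section \ref{sec:hbioa}, but with the single Helmholtz fundamental solution $\gamma_k$ replaced by the scalar function $E_{22}$. Since $E_{22}(x,y)=-\tfrac{1}{k_1^2-k_2^2}\big[(k_p^2-k_1^2)\gamma_{k_1}(x,y)-(k_p^2-k_2^2)\gamma_{k_2}(x,y)\big]$ is a fixed linear combination of two Helmholtz fundamental solutions, and since the defining limit and integration in $W_4p$ are linear in the kernel, $W_4$ is automatically the same linear combination of two acoustic hyper-singular operators, one with wave number $k_1$ and one with $k_2$. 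There is no Lam\'e structure to disentangle here, which is precisely why this case follows at once from the scalar result and is stated as an immediate consequence of (\ref{Wf2}).

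First I would write, denoting by $W_f^{k_i}$ and $V_f^{k_i}$ the acoustic operators of Section \ref{sec:hbioa} carried by kernel $\gamma_{k_i}$,
\[
W_4p(x)=-\frac{1}{k_1^2-k_2^2}\big[(k_p^2-k_1^2)\,W_f^{k_1}p(x)-(k_p^2-k_2^2)\,W_f^{k_2}p(x)\big].
\]
This is obtained by substituting the expression for $E_{22}$ into the definition of $W_4$, pulling the constant coefficients outside the integral, and matching the resulting expression to the definition of $W_f$, paying attention to the two minus signs (one from $W_4$ itself and one from the definition of $W_f$). I would then invoke the already-established regularization (\ref{Wf2}) for each wave number,
\[
W_f^{k_i}p=-(\nu_x\times\nabla_x)\cdot V_f^{k_i}(\nu\times\nabla p)-k_i^2\,\nu_x^\top V_f^{k_i}(p\nu),\quad i=1,2,
\]
and regroup the four resulting terms. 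The two curl--curl contributions fuse into a single surface integral whose kernel is $\nu_x\times\nabla_x\big[(k_p^2-k_1^2)\gamma_{k_1}-(k_p^2-k_2^2)\gamma_{k_2}\big]$ paired against $\nu_y\times\nabla_y p$, yielding the first term of (\ref{W4-1}); the two normal contributions, in which the factor $k_i^2$ from (\ref{Wf2}) multiplies the coefficient $(k_p^2-k_i^2)$, fuse into the integral with kernel $k_1^2(k_p^2-k_1^2)\gamma_{k_1}-k_2^2(k_p^2-k_2^2)\gamma_{k_2}$ weighted by $\nu_x^\top\nu_y$, giving the second term.

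Because the scalar block decouples completely from the elastic part, there is essentially no analytic obstacle in this theorem—unlike Theorem \ref{main}, no G\"unter-derivative manipulation is required, and the singular behaviour is already controlled by the acoustic formula (\ref{Wf2}). The only point that genuinely demands care is the bookkeeping of the scalar prefactors $\tfrac{1}{k_1^2-k_2^2}$ and $(k_p^2-k_i^2)$ together with the various signs, so as to verify that the $k_i^2$ arising in (\ref{Wf2}) attaches to the correct coefficient in the normal term and that both overall signs in (\ref{W4-1}) emerge positive after cancellation with the leading $-\tfrac{1}{k_1^2-k_2^2}$.
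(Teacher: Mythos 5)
Your proposal is correct and follows exactly the paper's intended argument: the paper gives no separate proof of Theorem \ref{main3}, stating it as an immediate consequence of (\ref{Wf2}) because $E_{22}$ is a fixed linear combination of the Helmholtz fundamental solutions $\gamma_{k_1}$ and $\gamma_{k_2}$, which is precisely the linearity-plus-(\ref{Wf2}) computation you carried out. Your sign and coefficient bookkeeping is accurate and reproduces (\ref{W4-1}) exactly.
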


\begin{remark}
It can be easily verified from the Stokes formulas of G\"unter derivatives that using the proposed regularized formulations, all the integrals in the corresponding weak forms of $W_su$ and $\widetilde{W}_sU$ are at most weakly-singular.
\end{remark}

\section{Numerical tests}
\label{sec:ne}

In this section, we present several numerical examples to demonstrate the accuracy of the  proposed scheme solving the exterior ESP and TESP.   We now take the ESP as the model to describe the method for numerical implementations.

\subsection{Numerical implementations}
\label{sec:ni}

Using the weak form of the regularized formulation (\ref{Ws1}), it follows that (\ref{coematrix1}) can be retreated as
\be
\label{coematrix11}
{\bf A}_h(k,j)&=& \rho\omega^2\int_{\G_h}\int_{\G_h} \gamma_{k_s}(x,y)(\nu_x\nu_y^\top-\nu_x^\top\nu_yI-J_{\nu_x,\nu_y})\psi_j(y) \psi_k(x)ds_yds_x\nonumber\\
&-& \rho\omega^2\int_{\G_h}\int_{\G_h} \gamma_{k_p}(x,y)\nu_x\nu_y^\top\psi_j(y) \psi_k(x)ds_yds_x\nonumber\\
&-& 2\mu\int_{\G_h}\int_{\G_h} M_x\psi_k(x) \nabla_y[\gamma_{k_s}(x,y)-\gamma_{k_p}(x,y)]\nu_y^\top \psi_j(y)\,ds_yds_x\nonumber\\
&+& \mu\int_{\G_h}\int_{\G_h} \gamma_{k_s}(x,y) \left(\nu_y\times\nabla_y\psi_j(y)\right)\cdot \left(\nu_x\times\nabla_x\psi_k(x)\right)ds_yds_x I\nonumber\\
&-& 2\mu\int_{\G_h}\int_{\G_h}\gamma_{k_s}(x,y) M_x\psi_k(x)M_y\psi_j(y)\,ds_yds_x\nonumber\\
&+& 4\mu^2\int_{\G_h}\int_{\G_h} M_x\psi_k(x) E(x,y)M_y\psi_j(y)\,ds_yds_x\nonumber\\
&+& 2\mu\int_{\G_h}\int_{\G_h} \nu_x\nabla_x^\top [\gamma_{k_s}(x,y)-\gamma_{k_p}(x,y)] M_y\psi_j(y)\psi_k(x)\,ds_yds_x\nonumber\\
&-& \mu \sum_{j,k,l=1}^3\int_{\G_h}\int_{\G_h} \gamma_{k_s}(x,y)M_{\psi_j,\psi_k}ds_yds_x,
\en
in which the entries of the matrix $M_{\psi_j,\psi_k}$ are given by
\ben
M_{\psi_j,\psi_k}(m,n)=\sum_{l=1}^3m_y^{lm}\psi_j(y)m_x^{nl}\psi_k(x),\quad m,n=1,2,3.
\enn
In (\ref{coematrix11}), all the integrals are at most weakly-singular. It can be obtained from decompositions that the weakly-singular kernels in (\ref{coematrix11}) are of types
\ben
\frac{1}{|x-y|},\quad \frac{(x-y)(x-y)^\top}{|x-y|^3}.
\enn

\label{sec:lcs}
\begin{figure}[htbp]
\centering
\includegraphics[scale=0.6]{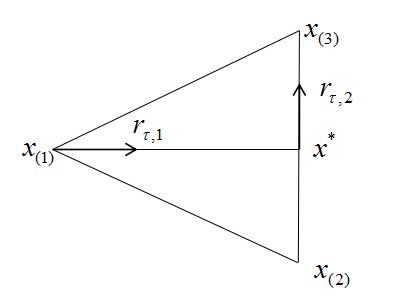}
\caption{Boundary element $\tau$.}
\label{Fig1}
\end{figure}

To compute the weakly singular integrals efficiently, we apply the special local coordinate system given in \cite{RS} to the boundary element $\tau$ with vertex $x_{(1)},x_{(2)},x_{(3)}$. Define the unit vector $r_{\tau,2}=(x_{(3)}-x_{(2)})/(|x_{(3)}-x_{(2)}|)$. Set $q_1=(x_{(2)}-x_{(1)})\cdot r_{\tau,2}$ and $q_2=(x_{(3)}-x_{(1)})\cdot r_{\tau,2}$. Then the intersection point $x^*$ can be determined by $x^*=x_{(3)}-q_2r_{\tau,2}$ or equivalently, $x^*=x_{(2)}-q_1r_{\tau,2}$. Define another unit vector $r_{\tau,1}=(x^*-x_{(1)})/(p_\tau)$, $p_\tau=|x^*-x_{(1)}|$ and the proportional coefficients $\alpha_i=q_i/s_\tau,\quad i=1,2$. Thus, the boundary element $\tau$ can be parameterized as
\ben
\tau=\{x=x(p_x,q_x)=x_{(1)}+p_xr_{\tau,1}+q_xr_{\tau,2}: 0<p_x<p_\tau,\; \alpha_1p_x<q_x<\alpha_2p_x\}.
\enn
Then for $x,y\in\tau$, $|x-y|^2=(p_x-p_y)^2+(q_x-q_y)^2$. The outward unit norma $\nu_\tau$ to the element $\tau$ is determined by $\nu_\tau=r_{\tau,1}\times r_{\tau,2}$. Moreover, the piecewise linear basis function $\psi_{(1)}$ for the vertex $x_{(1)}$ on $\tau$ can be formulated as $\psi_{(1)}(x)=\psi_{(1)}(x(p_x,q_x))=(p_\tau-p_x)/(p_\tau),  x\in\tau$, and $\nabla_x\psi_{(1)}(x)=-r_{\tau,1}/p_\tau$. In addition, using the above parametrisation we have
\ben
\int_\tau f(x)\,ds_x= \int_0^{p_\tau}\int_{\alpha_1p_x}^{\alpha_2p_x}f(x(p_x,q_x))\,dq_xdp_x,
\enn
or
\ben
\int_\tau f(x)\,ds_x &=& \int_{q_1}^0\int_{p_\tau-(q_1-q_x)/\alpha_1}^{p_\tau}f(x(p_x,q_x))\,dp_xdq_x\\
&+& \int_0^{q_2}\int_{p_\tau-(q_2-q_x)/\alpha_2}^{p_\tau} f(x(p_x,q_x))\,dp_xdq_x.
\enn

Now we present the main computing strategy of the numerical implementation. Set $\tau=\tau_i$. Corresponding to the piecewise linear basis function $\psi_{i_m}(x), m=1,2,3$ on $\tau$, set
\ben
x_{(n)}=x_{i_{B(m,n)}},\quad n=1,2,3,\quad B=\begin{bmatrix}
1 & 2 & 3\\
2 & 3 & 1\\
3 & 1 & 2
\end{bmatrix}.
\enn
Using this reorder strategy and the local coordinate system, $\psi_{i_m}(x)=\psi_{(1)}(x(p_x,q_x))$ on $\tau$. Therefore,
\ben
\nabla_x\psi_{i_m}(x)=-\frac{1}{p_\tau}r_{\tau,1},\quad M_x\psi_{i_m}(x)=-\frac{1}{p_\tau} (r_{\tau,1}\nu_\tau^\top-\nu_\tau r_{\tau,1}^\top),\quad x\in\tau,
\enn
are all constants.

The nonsingular integrals involved in (\ref{coematrix11}) can be approximated by Gaussian quadrature for triangular elements and we only need to consider the following integrals
\ben
I_1 &=& \int_{\tau_i}\int_{\tau_i} \frac{1}{|x-y|}\,ds_yds_x,\\
I_2 &=& \int_{\tau_i}\int_{\tau_i} \frac{1}{|x-y|}\psi_{i_m}(y)\psi_{i_n}(x)\,ds_yds_x,\quad m,n=1,2,3,\\
I_3 &=& \int_{\tau_i}\int_{\tau_i} \frac{(x-y)(x-y)^\top}{|x-y|^3}\,ds_yds_x,
\enn
which can be numerically computed following the steps described in \cite{RS} in a semi-analytic sense.

\subsection{Numerical examples}

In the numerical tests, the direct solver '$\setminus$' in Matlab is employed for solutions of the linear system (\ref{linearsys}). The impenetrable obstacle $\Om$ is set to be a unit ball (see Figure \ref{obstacle} (a)) or star-like (see Figure \ref{obstacle} (b)) with radial function
\ben
r(\theta,\phi)=\sqrt{0.8+0.5(\cos2\phi-1)(\cos4\theta-1)},\quad\theta\in[0,\pi], \;\phi\in[0,2\pi].
\enn
For these two obstacles, the origin $O$ is in $\Omega$. In our numerical tests, we first compute the unknown potentials $\varphi_h$ and $\Psi_h$ on $\Gamma_h$ by solving the variational equations (\ref{weak1}) and (\ref{weak2}), respectively and then put them into the solution representations (\ref{DirectBRF1}) and (\ref{DirectBRF2}) to get the numerical solutions $u_h$ and $U_h$ in $\Om^c$, i.e.,
\ben
u_h(x) &=& \int_{\Gamma_h}(T_y E(x,y))^\top \varphi_h(y)\,ds_y,\\
U_h(x) &=& \int_{\Gamma_h}(\widetilde{T}^*(\pa_y,\nu_y) \widetilde{E}^\top(x,y))^\top \Psi_h(y)\,ds_y.
\enn

\begin{figure}[htbp]
\centering
\begin{tabular}{ccc}
\includegraphics[scale=0.24]{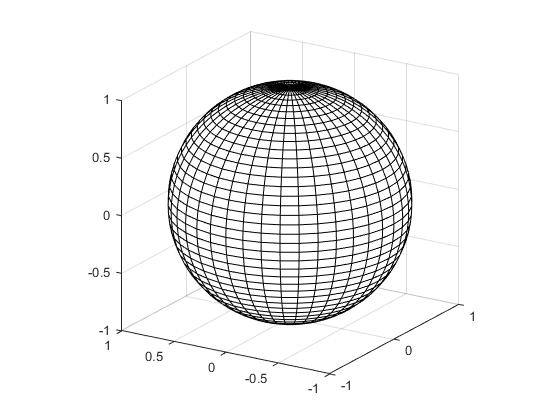} &
\includegraphics[scale=0.24]{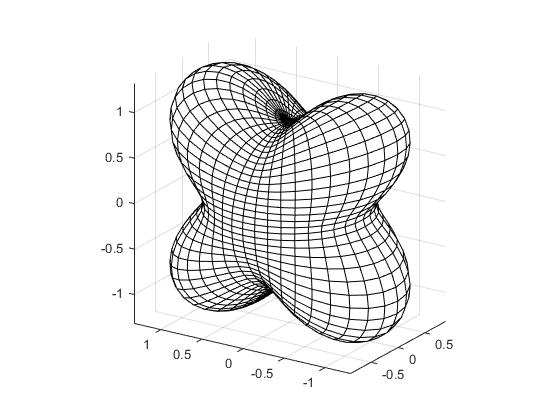} \\
(a) Obstacle I & (b) Obstacle II
\end{tabular}
\caption{Impenetrable obstacles to be considered in numerical tests.}
\label{obstacle}
\end{figure}

\subsubsection{Numerical examples for ESP}

Set $\omega=1$, $\rho=1$, $\lambda=2$, $\mu=1$. Let the exact solution be
\ben
u(x)=\nabla_x\left(\frac{e^{ik_p|x|}}{4\pi|x|}\right),\quad x\in\Omega^c.
\enn
Denote $\G_m:=\{x=(x_1,x_2,x_3)^\top\in\R^3: x_1=2\cos\theta,x_2=2,x_3=1.5\cos\theta,\theta\in[0,2\pi]\}$. Define the numerical error
\ben
\mbox{Error}:=\|u-u_h\|_{L^\infty(\G_m)^3}.
\enn
For simplicity, we use 'RP' and 'IP' to stand for 'real part' and 'imaginary part', respectively. The exact and numerical solutions on $\G_m$ are plotted in Figure \ref{Fig3} for Obstacle I with $h=0.1005$. We observe that the numerical solutions are in a perfect agreement with the exact ones from the qualitative point of view. In Table \ref{Table1}, we present the numerical errors $\mbox{Error}$ with respect to the meshsize $h$ which indicate the asymptotic convergence order $O(h^2)$. These results verify the accuracy of the regularized formulation for hyper-singular BIO $W_s$.

\begin{figure}[ht]
\centering
\begin{tabular}{ccc}
\includegraphics[scale=0.18]{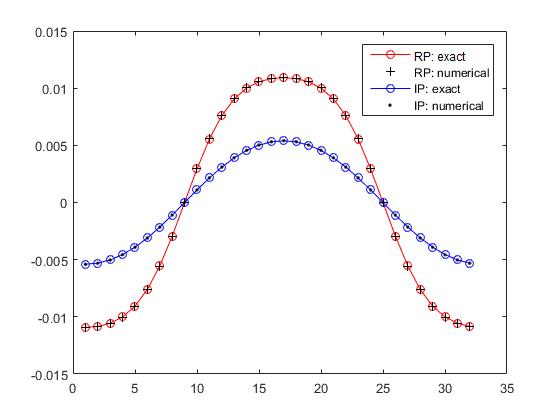} &
\includegraphics[scale=0.18]{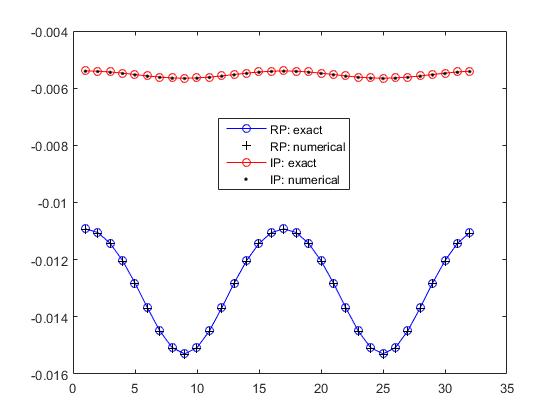} &
\includegraphics[scale=0.18]{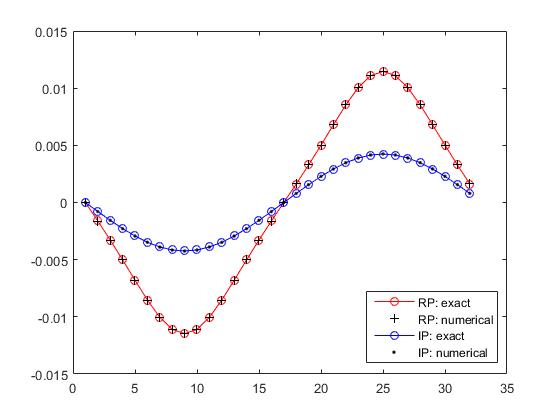} \\
(a) $u_1$ & (b) $u_2$ & (c) $u_3$
\end{tabular}
\caption{The real and imaginary parts of the exact and numerical solutions when $\Om$ is Obstacle I with $h=0.1005$.}
\label{Fig3}
\end{figure}

\begin{table}[ht]
\caption{Numerical errors $\mbox{Error}$ with respect to the meshsize $h$.}
\centering
\begin{tabular}{ccc}
\hline
$h$ & $\mbox{Error}$ & Order \\
\hline
0.4880 & 1.46E-3 & --    \\
0.3871 & 7.51E-4 & 2.87  \\
0.2668 & 2.95E-4 & 2.51  \\
0.1913 & 1.33E-4 & 2.39  \\
0.1005 & 3.01E-5 & 2.31  \\
\hline
\end{tabular}
\label{Table1}
\end{table}

Next, we consider the scattering of an incident plane wave $u^{in}$ taking the form
\ben
u^{in}=ik_pde^{ik_px\cdot d},\quad x\in\R^3,\quad d=(\sin\theta^{in}\cos\phi^{in}, \sin\theta^{in}\sin\phi^{in}, \cos\theta^{in})^\top\in\mathcal{S}^2.
\enn
by Obstacle II where $(\theta^{in},\phi^{in})$ is the incident direction. In this case, $f=-T(\pa,\nu)u^{in}$ on $\G$. We choose $\theta^{in}=\pi/2$ and $\phi^{in}=0$. The real and imaginary parts of the numerical solution $u_h$ on four unit spheres surrounding the obstacle is presented in Figure \ref{Fig5}.

\begin{figure}[ht]
\centering
\begin{tabular}{ccc}
\includegraphics[scale=0.18]{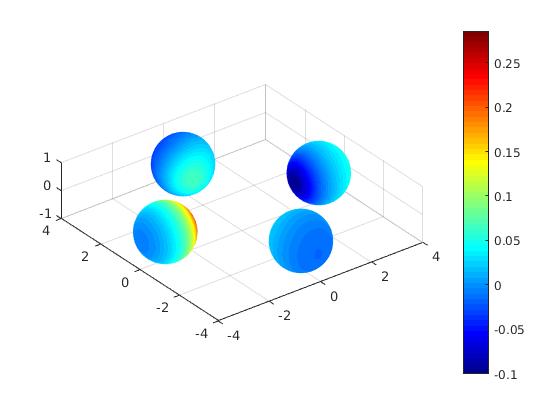} &
\includegraphics[scale=0.18]{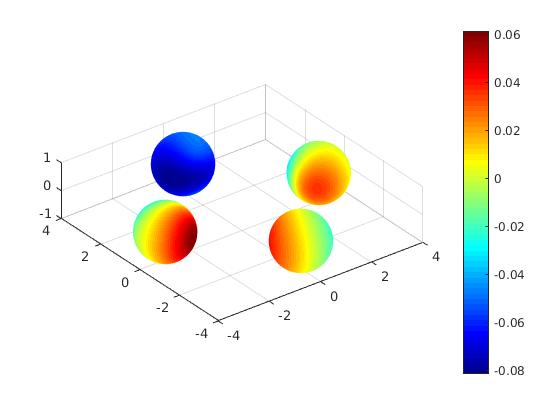} &
\includegraphics[scale=0.18]{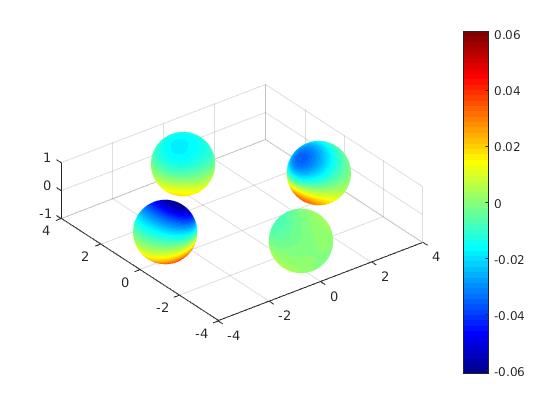} \\
(a) $\mbox{Re}(u_1)$ & (b) $\mbox{Re}(u_2)$ & (c) $\mbox{Re}(u_3)$ \\
\includegraphics[scale=0.18]{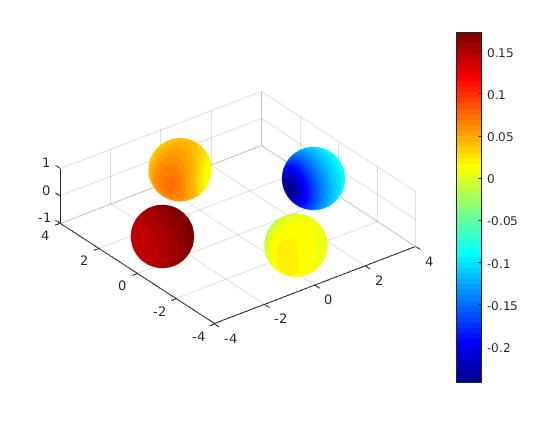} &
\includegraphics[scale=0.18]{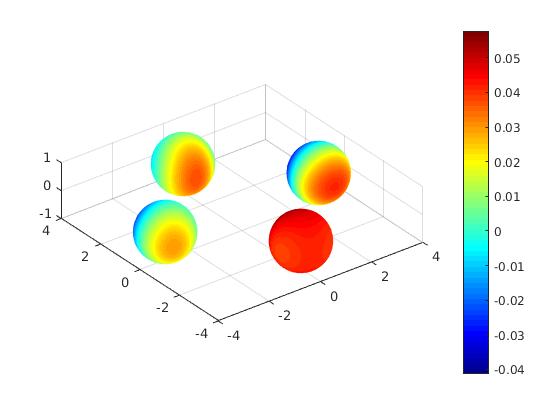} &
\includegraphics[scale=0.18]{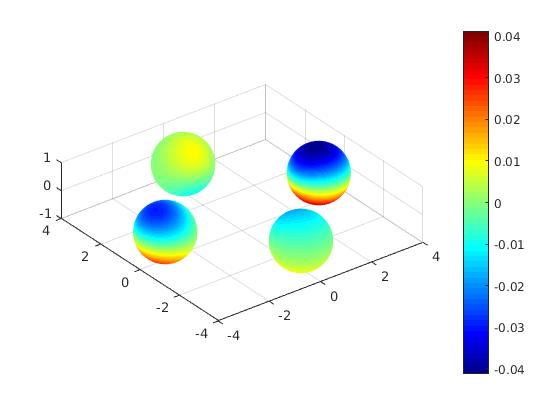} \\
(d) $\mbox{Im}(u_1)$ & (e) $\mbox{Im}(u_2)$ & (f) $\mbox{Im}(u_3)$
\end{tabular}
\caption{The real and imaginary parts of the numerical solutions of the scattering of plane incident wave for Obstacle II.}
\label{Fig5}
\end{figure}

\subsubsection{Numerical examples for TESP}

Choose $\omega=1$, $\rho=2$, $\lambda=1$, $\mu=1$, $\kappa=1$, $\eta=0.2$ and $\gamma=0.1$. The exact solution is set to be
\ben
u(x)=E_{12}(x,z),\quad p(x)=E_{22}(x,z),\quad x\in\Om^c,
\enn
and $z=(0,1,0.3,0.2)^\top\in\Omega$. Define the numerical error
\ben
\widetilde{\mbox{Error}}:=\|U-U_h\|_{L^\infty(\G_m)^4}.
\enn
We plot the exact and numerical solutions on $\G_m$ in Figure \ref{Fig6} for Obstacle I with $h=0.1005$. The numerical solutions are in a perfect agreement with the exact ones from the qualitative point of view. In Table \ref{Table2}, we present the numerical errors $\widetilde{\mbox{Error}}$ with respect to the meshsize $h$ which also indicate the convergence. These results verify the accuracy of the regularized formulation for hyper-singular BIO $\widetilde{W}_s$.

\begin{figure}[ht]
\centering
\begin{tabular}{cccc}
\includegraphics[scale=0.18]{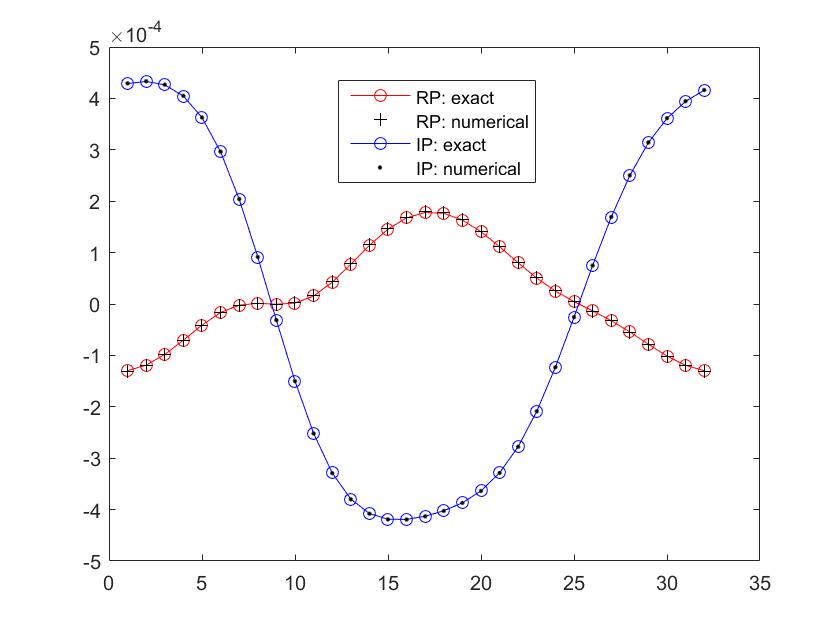} &
\includegraphics[scale=0.18]{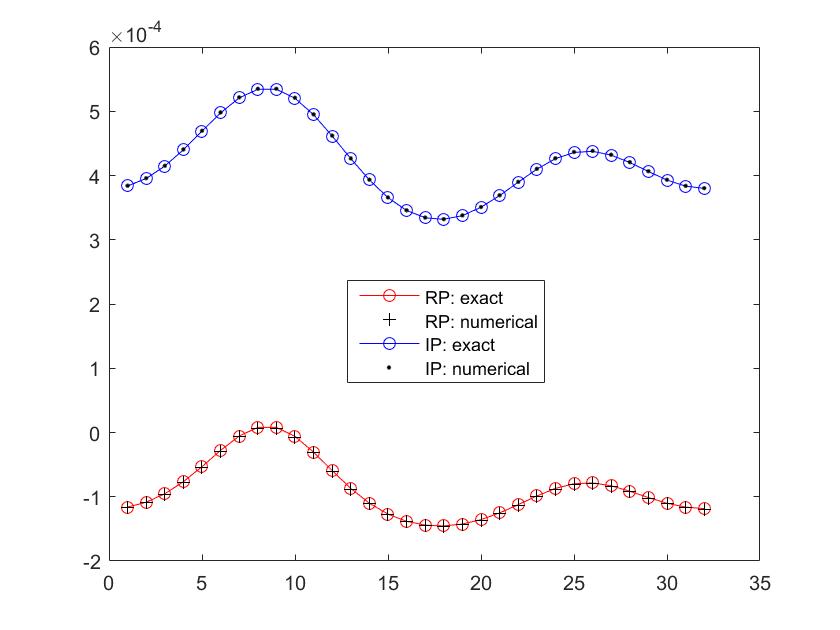} \\
(a) $u_1$ & (b) $u_2$ \\
\includegraphics[scale=0.18]{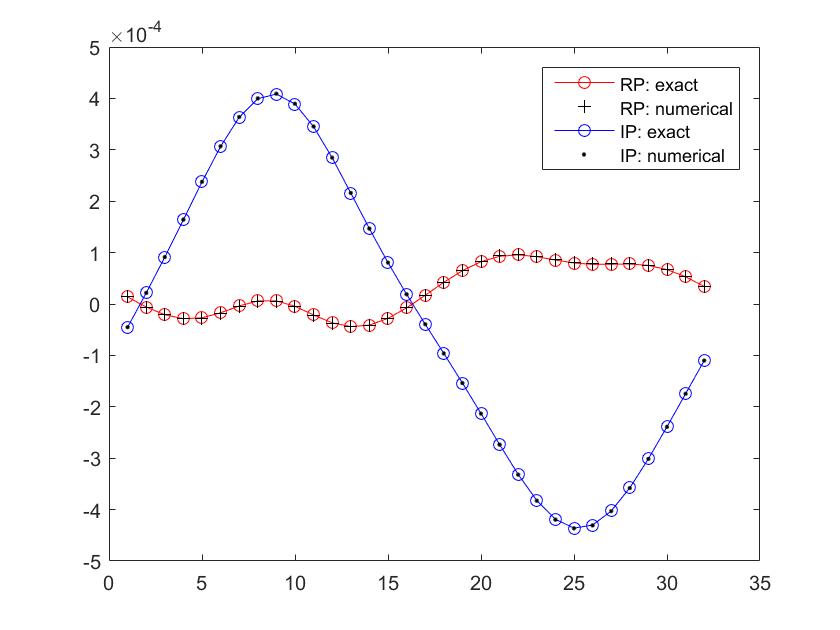} &
\includegraphics[scale=0.18]{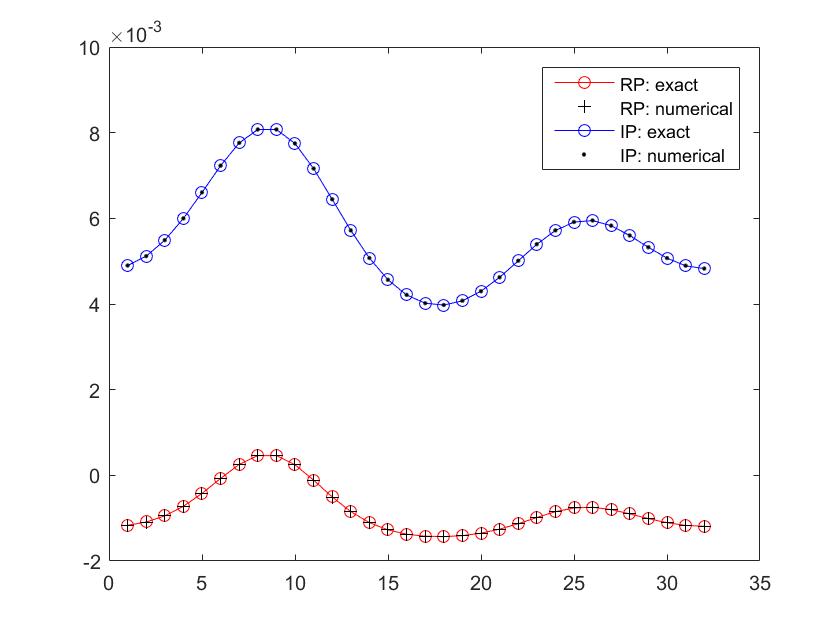} \\
(c) $u_3$ & (d) $p$
\end{tabular}
\caption{The real and imaginary parts of the exact and numerical solutions when $\Om$ is Obstacle I with $h=0.1005$.}
\label{Fig6}
\end{figure}

\begin{table}[ht]
\caption{Numerical errors $\widetilde{\mbox{Error}}$ with respect to the meshsize $h$.}
\centering
\begin{tabular}{ccc}
\hline
$h$ & $\widetilde{\mbox{Error}}$ & Order \\
\hline
0.4880 & 5.32E-4 & --    \\
0.3871 & 2.22E-4 & 3.77  \\
0.2668 & 1.07E-4 & 1.96  \\
0.1913 & 3.59E-5 & 3.28  \\
0.1005 & 2.61E-6 & 4.07  \\
\hline
\end{tabular}
\label{Table2}
\end{table}

Finally, we consider the scattering of an incident point source $U^{in}=({u^{in}}^\top,p^{in})^\top$ taking the form
\ben
u^{in}(x)=E_{12}(x,z),\quad p^{in}(x)=E_{22}(x,z),\quad x,z\in\Om^c,
\enn
by Obstacle II where $z$ is the location of point source. In this case, $F=-\widetilde{T}(\pa,\nu)U^{in}$ on $\G$. We choose $z=(0,0,2)^\top$. The real and imaginary parts of the numerical solutions $U_h$ on four unit spheres surrounding the obstacle is presented in Figure \ref{Fig8}.

\begin{figure}[ht]
\centering
\begin{tabular}{cccc}
\includegraphics[scale=0.09]{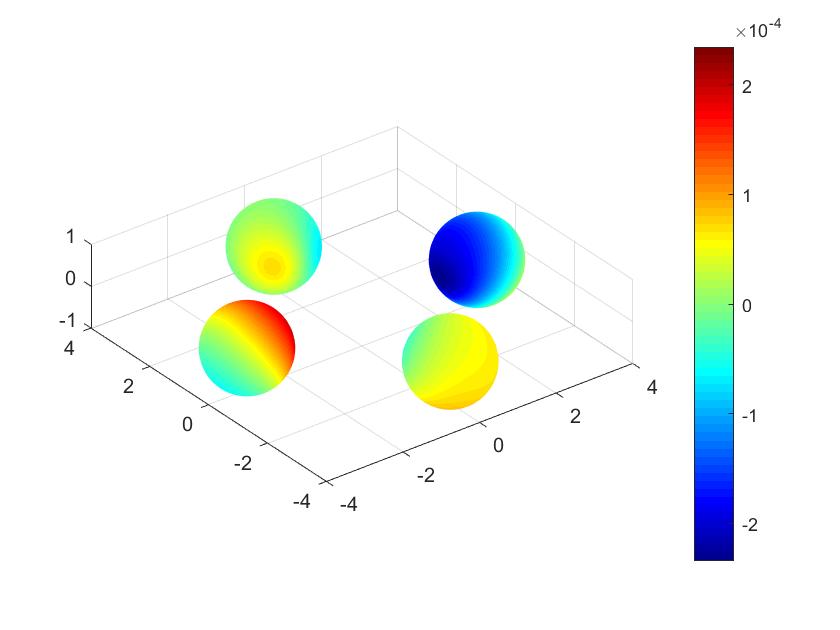} &
\includegraphics[scale=0.09]{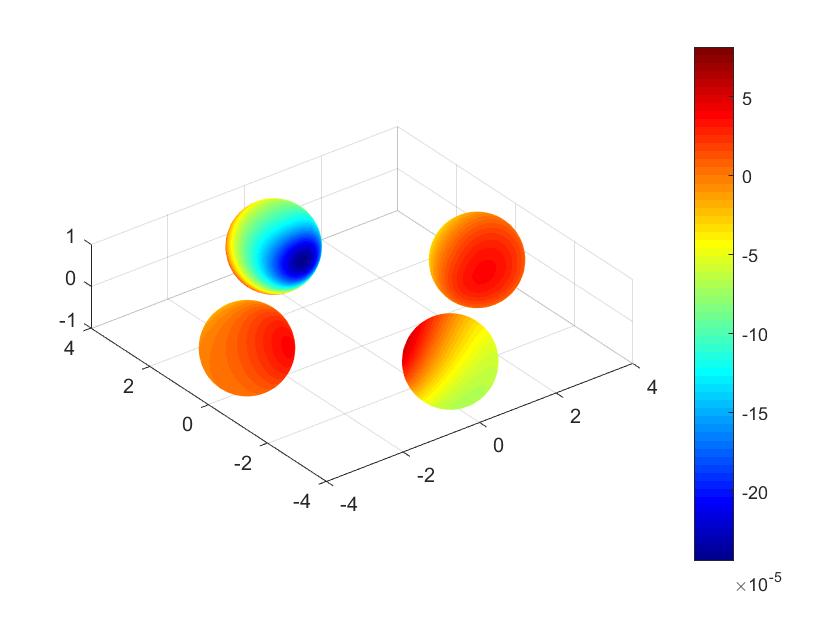} &
\includegraphics[scale=0.09]{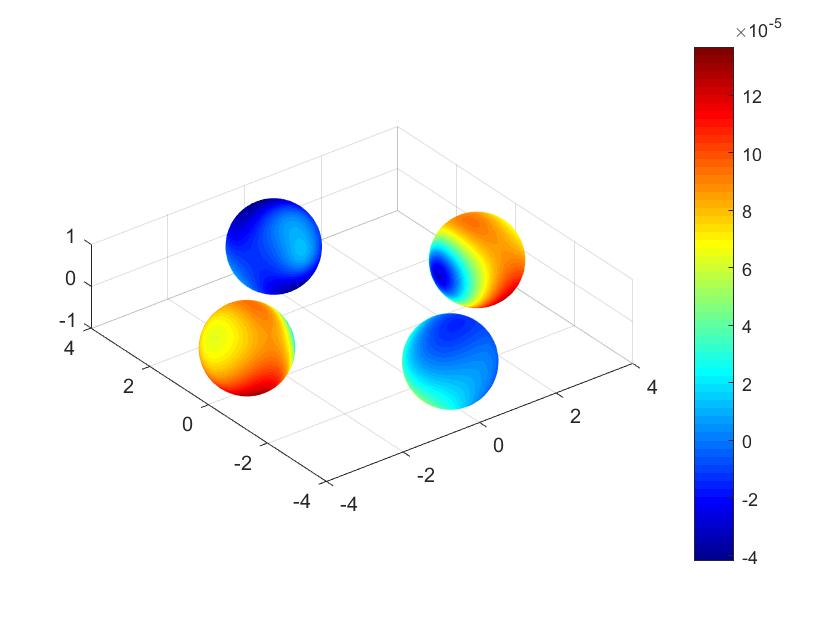} &
\includegraphics[scale=0.09]{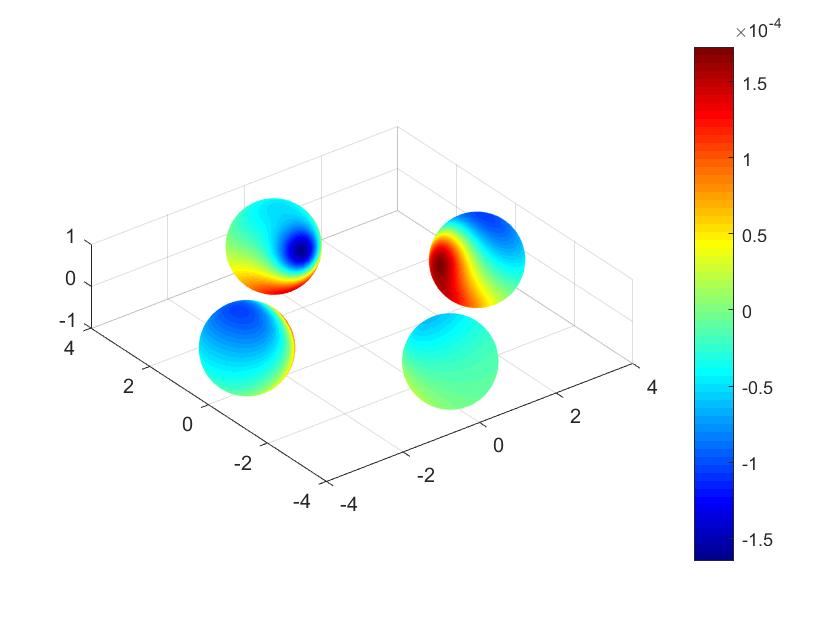} \\
(a) $\mbox{Re}(u_1)$ & (b) $\mbox{Re}(u_2)$ & (c) $\mbox{Re}(u_3)$ & (d) $\mbox{Re}(p)$ \\
\includegraphics[scale=0.09]{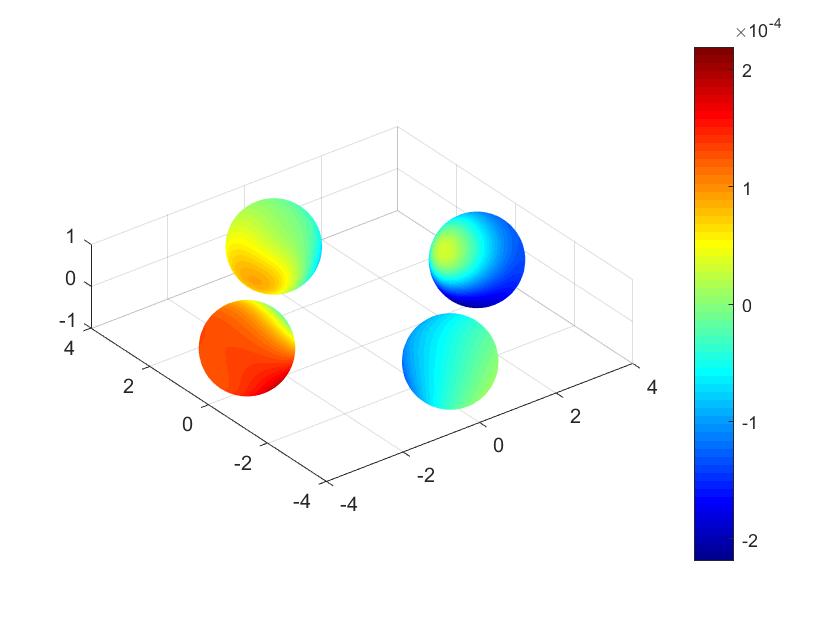} &
\includegraphics[scale=0.09]{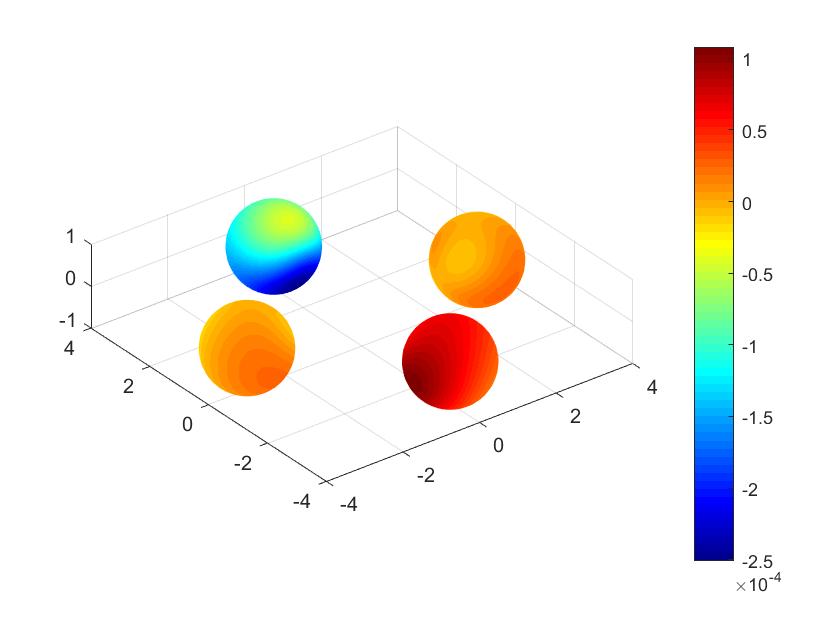} &
\includegraphics[scale=0.09]{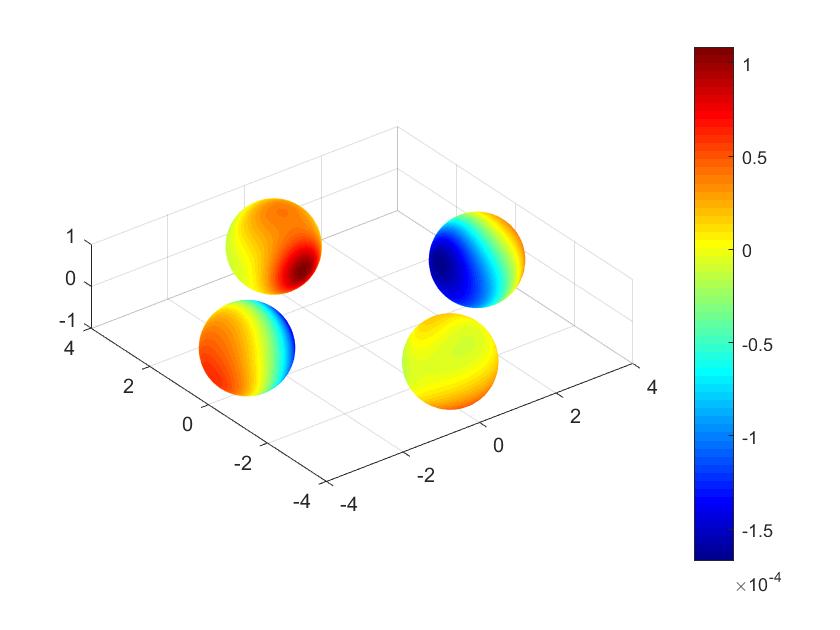} &
\includegraphics[scale=0.09]{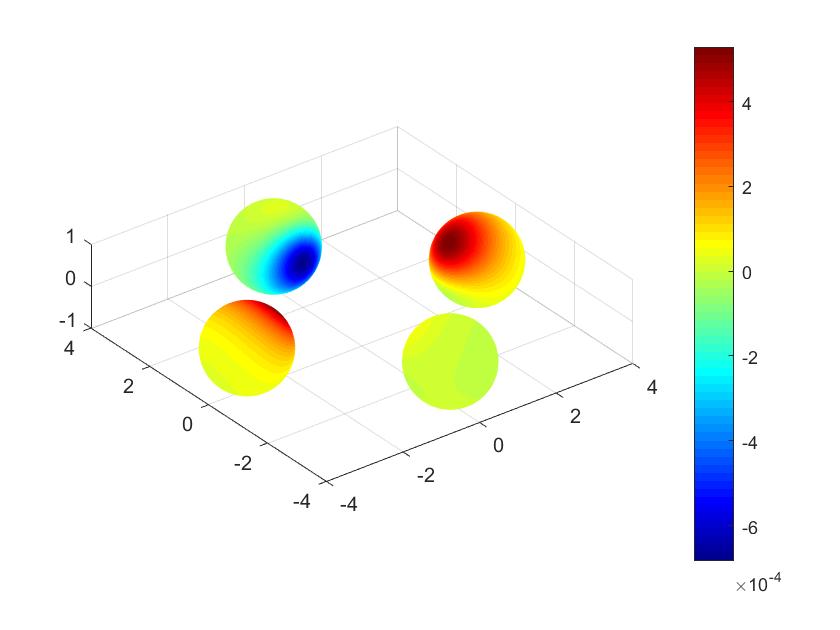} \\
(e) $\mbox{Im}(u_1)$ & (f) $\mbox{Im}(u_2)$ & (g) $\mbox{Im}(u_3)$ & (h) $\mbox{Im}(p)$
\end{tabular}
\caption{The real and imaginary parts of the numerical solutions of the scattering of point source for Obstacle II.}
\label{Fig8}
\end{figure}

\appendix
\section{Proof of Theorem \ref{main}}
\label{appendex.a}
We know from (\ref{TyExy}) that
\ben
\mathcal{D}_su(z)=-f_1(z)+f_2(z)+f_3(z),
\enn
where
\ben
f_1(z) &=& \int_{\Gamma} \nabla_y [\gamma_{k_s}(z,y)-\gamma_{k_p}(z,y)]\nu_y^\top u(y)ds_y,\\
f_2(z) &=& \int_{\Gamma}  \pa_{\nu_y}\gamma _{k_s}(z,y)u(y)ds_y,\\
f_3(z) &=& \int_{\Gamma} [2\mu E(z,y)-\gamma_{k_s}(z,y)I] M_yu(y)ds_y.
\enn
Note that
\be
\label{Wsproof1}
W_su(x)= \lim_{z\rightarrow x\in\Gamma,z\notin\Gamma} (g_1(z)-g_2(z)-g_3(z)),
\en
where
\ben
g_i(z) = \mu\nu_x\cdot\nabla_z f_i(z) +(\lambda+\mu)\nu_x(\nabla_z \cdot f_i(z)+\mu M_{z,x}f_i(z).
\enn
We obtain from (\ref{Tgrad}) that
\be
\label{Wsproof2}
g_1(z)&=& (\lambda+2\mu) \int_\Gamma [k_s^2\gamma_{k_s}(z,y)-k_p^2\gamma_{k_p}(z,y)] \nu_x\nu_y^\top u(y)ds_y \nonumber\\
&+&  2\mu\int_\Gamma M_{z,x}\nabla_y[\gamma_{k_s}(z,y)-\gamma_{k_p}(z,y)]\nu_y^\top u(y)ds_y.
\en
From (\ref{Wf2}) we can obtain that
\be
\label{Wsproof3}
g_2(z)&=& \mu\int_\Gamma (\nu_x\cdot\nabla_z)\pa_{\nu_y}\gamma_{k_s}(z,y)u(y)ds_y +(\lambda+\mu)\int_\Gamma \nu_x\nabla_z^\top \pa_{\nu_y}\gamma_{k_s}(z,y) u(y)ds_y\nonumber\\
&+& \mu\int_\Gamma M_{z,x}\pa_{\nu_y}\gamma_{k_s}(z,y)u(y)ds_y\nonumber\\
&=& \mu\int_\Gamma \left(\nu_x\times\nabla_z\gamma_{k_s}(z,y)\right)\cdot \left(\nu_y\times\nabla_yu(y)\right)ds_y + \mu k_s^2\int_\Gamma \gamma_{k_s}(z,y)\nu_x^\top\nu_yu(y)ds_y\nonumber\\
&+& (\lambda+\mu)\int_\Gamma \nu_x\nabla_z^\top \pa_{\nu_y}\gamma_{k_s}(z,y) u(y)ds_y +\mu\int_\Gamma M_{z,x}\pa_{\nu_y}\gamma_{k_s}(z,y)u(y)ds_y
\en
For $g_3(z)$, we know from (\ref{TxExy}) that
\be
\label{Wsproof4}
&\quad& g_3(z)\nonumber\\
&=& 2\mu\int_\Gamma (\nu_x\cdot\nabla_z)\gamma_{k_s}(z,y) M_yu(y)ds_y \nonumber\\
&-& 2\mu\int_\Gamma \nu_x\nabla_z^\top [\gamma_{k_s}(z,y)-\gamma_{k_p}(z,y)]M_yu(y)ds_y\nonumber\\
&+& 4\mu^2\int_\Gamma M_{z,x}E(z,y)M_yu(y)ds_y -2\mu\int_\Gamma M_{z,x}\gamma_{k_s}(z,y)M_yu(y)ds_y \nonumber\\
&-& \mu\int_\Gamma (\nu_x\cdot\nabla_z)\gamma_{k_s}(z,y) M_yu(y)ds_y -(\lambda+\mu)\int_\Gamma \nu_x\nabla_z^\top\gamma_{k_s}(z,y)M_yu(y)ds_y\nonumber\\
&-& \mu\int_\Gamma M_{z,x}\gamma_{k_s}(z,y)M_yu(y)ds_y\nonumber\\
&=& \mu\int_\Gamma (\nu_x\cdot\nabla_z)\gamma_{k_s}(z,y) M_yu(y)ds_y -3\mu\int_\Gamma M_{z,x}\gamma_{k_s}(z,y)M_yu(y)ds_y\nonumber\\
&+& 4\mu^2\int_\Gamma M_{z,x}E(z,y)M_yu(y)ds_y \nonumber\\
&-& 2\mu\int_\Gamma \nu_x\nabla_z^\top [\gamma_{k_s}(z,y)-\gamma_{k_p}(z,y)]M_yu(y)ds_y\nonumber\\
&-& (\lambda+\mu)\int_\Gamma \nu_x\nabla_z^\top\gamma_{k_s}(z,y)M_yu(y)ds_y.
\en
Therefore, (\ref{Wsproof2})-(\ref{Wsproof4}) yields
\be
\label{Wsproof5}
&\quad& g_1(z)-g_2(z)-g_3(z) \nonumber\\
&=& (\lambda+2\mu) \int_\Gamma [k_s^2\gamma_{k_s}(z,y)-k_p^2\gamma_{k_p}(z,y)] \nu_x\nu_y^\top u(y)ds_y\nonumber\\
&+& 2\mu\int_\Gamma M_{z,x}\nabla_y[\gamma_{k_s}(z,y)-\gamma_{k_p}(z,y)]\nu_y^\top u(y)ds_y\nonumber\\
&-& \mu\int_\Gamma \left(\nu_x\times\nabla_z\gamma_{k_s}(z,y)\right)\cdot \left(\nu_y\times\nabla_yu(y)\right)ds_y - \mu k_s^2\int_\Gamma \gamma_{k_s}(z,y)\nu_x^\top\nu_yu(y)ds_y\nonumber\\
&+& 3\mu\int_\Gamma M_{z,x}\gamma_{k_s}(z,y)M_yu(y)ds_y -4\mu^2\int_\Gamma M_{z,x}E(z,y)M_yu(y)ds_y\nonumber\\
&+& 2\mu\int_\Gamma \nu_x\nabla_z^\top [\gamma_{k_s}(z,y)-\gamma_{k_p}(z,y)]M_yu(y)ds_y -\mu h_1(z)-(\lambda+\mu)h_2(z),
\en
where
\ben
h_1(z) &=& \int_\Gamma \left[ M_{z,x}\pa_{\nu_y}\gamma_{k_s}(z,y)u(y)+ (\nu_x\cdot\nabla_z)\gamma_{k_s}(z,y)M_yu(y)\right]ds_y\\
h_2(z) &=& \int_\Gamma \nu_x\left[\nabla_z^\top \pa_{\nu_y}\gamma_{k_s}(z,y) u(y)- \nabla_z^\top\gamma_{k_s}(z,y)M_yu(y)\right]ds_y.
\enn
Note that for $i,j=1,2,3$,
\ben
\sum_{l=1}^3 \left(m_y^{il}m_{z,x}^{lj}-m_{z,x}^{il}m_y^{lj}\right)
&=& (\nu_y^i\nu_x^j-\nu_x^i\nu_y^j)\Delta_z+ m^{ij}_{z,x}\pa_{\nu_y} -m^{ij}_y(\nu_x\cdot\nabla_z).
\enn
We conclude that
\ben
h_1(z) &=& \int_\Gamma \left[ M_{z,x}\pa_{\nu_y}\gamma_{k_s}(z,y)u(y)+ (\nu_x\cdot\nabla_z)\gamma_{k_s}(z,y)M_yu(y)\right]ds_y\nonumber\\
&=& \int_\Gamma \left[ M_{z,x}\pa_{\nu_y}\gamma_{k_s}(z,y)u(y)- M_y(\nu_x\cdot\nabla_z)\gamma_{k_s}(z,y)u(y)\right]ds_y\nonumber\\
&=& \int_\Gamma \left[ M_yM_{z,x}- M_{z,x}M_y\right]\gamma_{k_s}(z,y)u(y)ds_y\nonumber\\
&+& k_s^2\int_\Gamma \gamma_{k_s}(z,y)J(\nu_x,\nu_y)u(y)ds_y.
\enn
We obtain from the Stokes formula (\ref{stokes1}) that
\ben
&\quad& \lim_{z\rightarrow x\in\Gamma,z\notin\Gamma}\int_\Gamma M_yM_{z,x}\gamma_{k_s}(z,y)u(y)ds_y\\
&=& \left\{ \sum_{k,l=1}^3m_{jk}^ym_{kl}^x\gamma_{k_s}(x,y)u_l(y)ds_y \right\}_{j=1}^3\\
&=& \left\{ \sum_{k,l=1}^3m_{kl}^x\gamma_{k_s}(x,y)m_{kj}^yu_l(y)ds_y \right\}_{j=1}^3.
\enn
On the other hand,
\ben
\int_\Gamma M_{z,x}M_y\gamma_{k_s}(z,y)u(y)ds_y &=& -\int_\Gamma M_{z,x}\gamma_{k_s}(z,y)M_yu(y)ds_y.
\enn
Thus,
\be
\label{Wsproof6}
\lim_{z\rightarrow x\in\Gamma,z\notin\Gamma} h_1(z) &=& \left\{ \sum_{k,l=1}^3m_{kl}^x\gamma_{k_s}(x,y)m_{kj}^yu_l(y)ds_y \right\}_{j=1}^3\nonumber\\
&+& \int_\Gamma M_x\gamma_{k_s}(x,y)M_yu(y)ds_y\nonumber\\
&+& k_s^2\int_\Gamma \gamma_{k_s}(z,y)J(\nu_x,\nu_y)u(y)ds_y.
\en
Finally, since
\ben
\nu_x\int_\Gamma \nabla_z^\top\gamma_{k_s}(z,y)M_yu(y)ds_y =\nu_x\int_\Gamma \left[M_y\nabla_z\gamma_{k_s}(z,y)\right] \cdot u(y)ds_y
\enn
we have
\be
\label{Wsproof7}
h_2(z) &=& \nu_x\int_\Gamma \left[ \nabla_z \pa_{\nu_y}\gamma_{k_s}(z,y)- M_y\nabla_z\gamma_{k_s}(z,y) \right]\cdot u(y)ds_y\nonumber\\
&=& -\nu_x\int_\Gamma \Delta_z\gamma_{k_s}(z,y)\nu_y^\top u(y)ds_y\nonumber\\
&=& k_s^2\int_\Gamma \gamma_{k_s}(z,y)\nu_x\nu_y^\top u(y)ds_y.
\en
We complete the proof of (\ref{Ws1}) by a combination of (\ref{Wsproof1}) and (\ref{Wsproof5})-(\ref{Wsproof7}). 

\section{Proof of Lemma \ref{Tlemma1}}
\label{appendex.b}
For some matrix $A$ or vector $B$, we denote $(A)_{ij}$ and $(B)_i$ their Cartesian components, respectively. Let
\ben
R_1=\gamma_{k_s}-\frac{k_p^2-k_2^2}{k_1^2-k_2^2}\gamma_{k_1}+ \frac{k_p^2-k_1^2}{k_1^2-k_2^2}\gamma_{k_2}.
\enn
Then we have
\be
\label{Tlemma1-1}
(\nabla_x\cdot E_{11})_i &=& \frac{1}{\mu}\partial_{x_i}\gamma_{k_s} +\frac{1}{\rho\omega^2} \sum_{j=1}^d\partial_{x_i}\partial_{x_j}^2R_1 \nonumber\\
&=& \partial_{x_i}\left( \frac{1}{\mu}\gamma_{k_s}+\frac{1}{\rho\omega^2} \Delta_x R_1 \right),
\en
\be
\label{Tlemma1-2}
(\partial_{\nu_x}E_{11})_{ij}= \frac{1}{\mu}\partial_{\nu_x}\gamma_{k_s}\delta_{ij} +\frac{1}{\rho\omega^2}\sum_{l=1}^d \nu_x^l\partial_{x_l}\partial_{x_i}\partial_{x_j} R_1,
\en
and
\be
\label{Tlemma1-3}
&\quad& (M_xE_{11})_{ij} \nonumber\\
&=& \frac{1}{\mu}M_x\gamma_{k_s}+ \frac{1}{\rho\omega^2} \sum_{l=1}^d (\partial_{x_i}\nu_x^l-\partial_{x_l}\nu_x^i) \partial_{x_l}\partial_{x_j}R_1\nonumber\\
&=& \frac{1}{\mu}M_x\gamma_{k_s} +\frac{1}{\rho\omega^2}\sum_{l=1}^d \nu_x^l\partial_{x_l}\partial_{x_i}\partial_{x_j} R_1 -\frac{1}{\rho\omega^2}\nu_x^i\pa_{x_j}\Delta_x R_1.
\en
Therefore, from (\ref{Tform2}) and (\ref{Tlemma1-1})-(\ref{Tlemma1-3}) we have
\ben
&\quad&(T(\pa_x,\nu_x)E_{11}(x,y))_{ij}\\
&=& (\lambda+\mu)\nu_x^i(\nabla_x\cdot E_{11})_j + \mu(\pa_{\nu_x}E_{11})_{ij} + \mu (M_xE)_{ij}\\
&=& \nu_x^i\pa_{x_j}\left( \frac{\lambda+\mu}{\mu}\gamma_{k_s}+ \frac{\lambda+2\mu}{\rho\omega^2} \Delta_x R_1 \right) +\partial_{\nu_x}\gamma_{k_s}\delta_{ij}+ (M_x(2\mu E_{11}-\gamma_{k_s}))_{ij}.
\enn
Note that
\ben
\Delta_x R_1 &=& -k_s^2\gamma_{k_s}+ \frac{(k_p^2-k_2^2)k_1^2}{k_1^2-k_2^2}\gamma_{k_1}- \frac{(k_p^2-k_1^2)k_2^2}{k_1^2-k_2^2}\gamma_{k_2}\\
&=& -k_s^2\gamma_{k_s} +\frac{(k_1^2-q)k_p^2}{k_1^2-k_2^2}\gamma_{k_1}- \frac{(k_2^2-q)k_p^2}{k_1^2-k_2^2}\gamma_{k_2}\\
&=& -k_s^2\gamma_{k_s}+ k_p^2\gamma_{k_1} +\frac{(k_2^2-q)k_p^2}{k_1^2-k_2^2} (\gamma_{k_1}-\gamma_{k_2}).
\enn
Hence,
\ben
(T(\pa_x,\nu_x)E_{11})_{ij} &=& -\nu_x^i\pa_{x_j} (\gamma_{k_s}-\gamma_{k_1}) + \frac{k_2^2-q}{k_1^2-k_2^2} \nu_x^i\pa_{x_j}(\gamma_{k_1}-\gamma_{k_2}) \\
&+& \partial_{\nu_x}\gamma_{k_s}\delta_{ij}+ (M_x(2\mu E_{11}-\gamma_{k_s}))_{ij}
\enn
which completes the proof of (\ref{TxE11}). The proof of (\ref{TyE11}) follows in similar way, and we skip it here.

\section{Proof of Theorem \ref{main1}}
\label{appendex.c}
Following the same steps in \ref{appendex.a} we can obtain that
\be
\label{W1proof8}
&\quad& -\lim_{z\rightarrow x\in\G,z\notin\G}T(\pa_z,\nu_x)\int_\G (T(\pa_y,\nu_y)E_{11}(z,y))^\top u(y)ds_y\nonumber\\
&=& \rho\omega^2\int_\G \gamma_{k_s}(x,y) (\nu_x\nu_y^\top-\nu_x^\top\nu_yI-J_{\nu_x,\nu_y})u(y)ds_y\nonumber\\
&-& \frac{k_1^2(k_1^2-q)(\lambda+2\mu)}{k_1^2-k_2^2}\int_\G  \gamma_{k_1}(x,y) \nu_x\nu_y^\top u(y)ds_y\nonumber\\
&+& \frac{k_2^2(k_2^2-q)(\lambda+2\mu)}{k_1^2-k_2^2}\int_\G\gamma_{k_2}(x,y) \nu_x\nu_y^\top u(y)ds_y\nonumber\\
&-& \mu\int_\Gamma \left(\nu_x\times\nabla_x\gamma_{k_s}(x,y)\right)\cdot \left(\nu_y\times\nabla_yu(y)\right)ds_y-4\mu^2\int_\Gamma M_xE(x,y)M_yu(y)ds_y\nonumber\\
&+& 2\mu\int_\Gamma M_x\gamma_{k_s}(x,y)M_yu(y)ds_y -\mu\left\{ \sum_{k,l=1}^3\int_\G m_x^{kl}\gamma_{k_s}(x,y) m_y^{kj}u_l(y)ds_y \right\}_{j=1}^3 \nonumber\\
&+& 2\mu\int_\G \nu_x\nabla_x^\top \left[\gamma_{k_s}(x,y) -\gamma_{k_1}(x,y)\right] M_yu(y)ds_y \nonumber\\
&+& 2\mu\int_\G M_x\nabla_y \left[\gamma_{k_s}(x,y) -\gamma_{k_1}(x,y)\right]\nu_y^\top u(y)ds_y\nonumber\\
&-& \frac{2\mu(k_2^2-q)}{k_1^2-k_2^2}\int_\G \nu_x\nabla_x^\top \left[\gamma_{k_1}(x,y) -\gamma_{k_2}(x,y)\right] M_yu(y)ds_y \nonumber\\
&-& \frac{2\mu(k_2^2-q)}{k_1^2-k_2^2}\int_\G M_x\nabla_y \left[\gamma_{k_1}(x,y) -\gamma_{k_2}(x,y)\right]\nu_y^\top u(y)ds_y.
\en
On the other hand, we have
\ben
T(\pa_z,\nu_x)E_{12}(z,y) &=& \frac{\gamma}{k_1^2-k_2^2} \nu_x\left[k_1^2\gamma_{k_1}(z,y)-k_2^2\gamma_{k_1}(z,y)\right]\nonumber\\
&-& \frac{2\mu\gamma}{(k_1^2-k_2^2)(\lambda+2\mu)} M_{z,x}\nabla_z\left[\gamma_{k_1}(z,y) -\gamma_{k_2}(z,y)\right] ,
\enn
and
\ben
T(\pa_y,\nu_y)E_{21}(z,y) &=& \frac{i\omega\eta}{k_1^2-k_2^2} \nu_y\left[k_1^2\gamma_{k_1}(z,y)-k_2^2\gamma_{k_1}(z,y)\right]\nonumber\\
&-& \frac{2i\mu\omega\eta}{(k_1^2-k_2^2)(\lambda+2\mu)} M_y\nabla_y\left[\gamma_{k_1}(z,y) -\gamma_{k_2}(z,y)\right].
\enn
Then we have
\be
\label{W1proof9}
&\quad& \lim_{z\rightarrow x\in\G,z\notin\G}\int_\G [i\omega\eta T(\pa_z,\nu_x)E_{12}(z,y)\nu_y^\top +\gamma\nu_x(T(\pa_y,\nu_y)E_{21}(z,y))^\top \nonumber\\
&\quad& \quad\quad\quad\quad\quad\quad\quad\quad\quad -i\omega\eta\gamma\nu_x\nu_y^\top E_{22}(z,y)]u(y)ds_y\nonumber\\
&=& \frac{i\omega\eta\gamma}{k_1^2-k_2^2}\int_\G [(k_p^2+k_1^2)\gamma_{k_1}(x,y)- (k_p^2+k_2^2)\gamma_{k_2}(x,y)]\nu_x\nu_y^\top u(y)ds_y\nonumber\\
&+& \frac{2i\mu\omega\eta\gamma}{(k_1^2-k_2^2)(\lambda+2\mu)} \int_\G M_x\nabla_y\left[\gamma_{k_1}(x,y) -\gamma_{k_2}(x,y)\right]\nu_y^\top u(y)ds_y\nonumber\\
&+& \frac{2i\mu\omega\eta\gamma}{(k_1^2-k_2^2)(\lambda+2\mu)} \int_\G
\nu_x\nabla_x^\top\left[\gamma_{k_1}(x,y) -\gamma_{k_2}(x,y)\right] M_yu(y)ds_y.
\en
Then (\ref{W1-1}) can be proved by combining (\ref{W1proof8}) and (\ref{W1proof9}).

\section*{Acknowledgments}
The work of G. Bao is supported in part by a NSFC Innovative Group Fund (No.11621101), an Integrated Project of the Major Research Plan of NSFC (No. 91630309), and an NSFC A3 Project (No. 11421110002). The work of L. Xu is partially supported by a Key Project of the Major Research Plan of NSFC (No. 91630205), and a NSFC Grant (No. 11771068).

\end{document}